\def\bm#1{\boldsymbol{#1}}
\DeclareMathAlphabet{\mathcal}{OMS}{cmsy}{m}{n}
\title{Bifurcations of relative equilibria near zero momentum in Hamiltonian systems with spherical symmetry}
\author{James Montaldi  \\[12pt] 
\normalsize\color{Grey}School of Mathematics,
University of Manchester, \\
\normalsize\color{Grey} Manchester M13 9PL, 
UK}
\date{\color{Grey}February, 2014}
\smallskip\noindent{\color{magenta}\vrule width 1pt\kern2pt\vrule width 1pt}\kern1cm
\newtheorem{theorem}{Theorem}[section]
\newtheorem{lemma}[theorem]{Lemma}
\newtheorem{proposition}[theorem]{Proposition}
\newenvironment{proof}%
        {\addvspace\baselineskip\noindent {\sc Proof:\ }}%
        {\hfill \ding{114} \par\addvspace\baselineskip}  %
\newenvironment{proofof}[1]%
        {\addvspace\baselineskip\noindent {\sc Proof of #1:\ }}%
        {\hfill \ding{114} \par\addvspace\baselineskip}  %
\newtheorem{definition}[theorem]{Definition}
\newtheorem{remark}[theorem]{Remark}
\newtheorem{remarks}[theorem]{Remarks}
\def\defn#1{{\bfseries\itshape #1}}
\def\restr#1{\,\vrule height0.8ex width.4pt
           depth1.2ex\lower1.0ex\hbox{\scriptsize $\,#1$}}
\def\Ad{\mathop\mathrm{Ad}\nolimits}
\def\Coad{\mathop\mathrm{Coad}\nolimits}
\def\coad{\mathop\mathrm{coad}\nolimits}
\def\rk{\mathop\mathrm{rk}\nolimits}
\def\re{\textsc{re}}
\def\eps{\varepsilon}
\def\bma{\bm{\alpha}}
\newcommand\JJ{\mathbf{J}}
\newcommand\jj{\mathbf{j}}
\newcommand{\qq}{\mathbf{q}}
\newcommand{\uu}{\mathbf{u}}
\newcommand{\xx}{\mathbf{x}}
\newcommand\CC{\mathbb{C}}
\newcommand\II{\mathbb{I}}
\newcommand\RR{\mathbb{R}}
\newcommand\TT{\mathbb{T}}
\newcommand\ZZ{\mathbb{Z}}
\newcommand\kE{\mathcal{E}}
\newcommand\RE{\mathcal{R}}
\newcommand\kG{\mathcal{G}}
\newcommand\kH{\mathcal{H}}
\newcommand\kK{\mathcal{K}}
\newcommand\kO{\mathcal{O}}
\newcommand\kR{\mathcal{R}}
\newcommand\KV{\mathcal{K}_V}
\newcommand\kP{\mathcal{P}}
\newcommand\kS{\mathcal{S}}
\newcommand\maxid{\bm{\mathfrak{m}}}
\renewcommand\d{\mathsf{d}}
\newcommand\ii{\mathsf{i}}
\newcommand\Mat{\mathsf{Mat}\,}
\newcommand\GL{\mathsf{GL}}
\newcommand\SO{\mathsf{SO}}
\newcommand\SU{\mathsf{SU}}
\renewcommand\gg{\mathfrak{g}}
\newcommand\gl{\mathfrak{gl}}
\newcommand\so{\mathfrak{so}}
\renewcommand\tt{\mathfrak{t}}
\def\Hbar{\overline{H}}
\def\half{{\textstyle \frac12}}
\begin{document}

\maketitle

\thispagestyle{empty}

\noindent\hrulefill %

\smallskip

\centerline{\large\bf\color{gray} Abstract}

\medskip

\noindent 
For Hamiltonian systems with spherical symmetry there is a marked difference between zero and non-zero momentum values, and amongst all relative equilibria with zero momentum there is a marked difference between those of zero and those of non-zero angular velocity.  We use techniques from singularity theory to study the family of relative equilibria that arise as a symmetric Hamiltonian which has a group orbit of equilibria with zero momentum is perturbed so that the zero-momentum relative equilibrium are no longer equilibria.   We also analyze the stability of these perturbed relative equilibria, and consider an application to satellites controlled by means of rotors. 

\medskip

{\small

\noindent \emph{MSC 2010}:\quad 70H33, 58F14, 37J20 \\[6pt]
\noindent \emph{Keywords}:\quad momentum map, symplectic reduction, bifurcations, SO(3) symmetry, relative equilibria 
 }

\noindent\hrulefill 

{\small 
\tableofcontents
}

\section*{Introduction}
\label{sec:intro}

Analogous to the fact that in generic Hamiltonian systems equilibrium points form a set of isolated points, in generic Hamiltonian systems with symmetry, for each value of the momentum the relative equilibria are isolated.  It is therefore reasonable to parametrize the set of relative equilibria by the momentum value, at least locally.  As the momentum  value varies, one would then expect to see bifurcations occur, and many of these have similar descriptions to bifurcations occurring at equilibria in generic (non-symmetric) Hamiltonian systems, such as saddle-node, pitchfork and Hamiltonian-Hopf bifurcations (see \cite{BLM05} for a review).  However there is one class of transition that is due to the `geometry of reduction' and which occurs as a result of the momentum passing through a non-regular value in the dual of the Lie algebra, which for the group $\SO(3)$ means passing through 0. This type of geometric bifurcation was first investigated in \cite{Mo97} in the case where the angular velocity is non-zero even though the angular momentum vanishes.  This was extended in \cite{MR99} to zero angular velocity, where there is also an application to the dynamics of molecules.  

In this paper we describe these geometric transitions in more detail for the symmetry group $\SO(3)$.  The results also apply to other compact Lie groups, where the momentum value passes through a generic point of a reflection hyperplane in the Cartan subalgebra, but not to more degenerate points (see Remark\,\ref{rmk:other groups}). There are two cases to consider, the first is the `generic' one, where the velocity at the zero-momentum relative equilibrium is non-zero (a \emph{transverse relative equilibrium} in the terminology of Patrick and Roberts \cite{PR00}) and in this case the set of relative equilibria forms a smooth curve in the orbit space, as shown by Patrick \cite{Pa99}, and the curve can be naturally parametrized by the momentum.  The other case is where the relative equilibrium in question consists of equilibria.   Although non-generic in the universe of all symmetric Hamiltonian systems, this is the situation in systems governed by kinetic and potential energies.  In this case the set of relative equilibria generically forms three smooth curves in the orbit space, as is familiar from Euler's equations for the rigid body.

The question we address here is how the two are related: start with a relative equilibrium $p$ with zero momentum and zero velocity (that is, a zero-momentum equilibrium), and perturb the Hamiltonian so that the zero momentum relative equilibrium no longer has zero velocity.  How does the set of relative equilibria change?  We find in particular that in the class of all Hamiltonian systems with $\SO(3)$ symmetry, the zero-momentum equilibrium is of codimension 3: generically it would only be seen in 3-parameter families of such systems.  

The most familiar example of an $\SO(3)$-invariant system is the rigid body, with Euler's equations mentioned above, where the reduced picture  (in $\so(3)^*\simeq\RR^3$) of the set of relative equilibria consists of three lines through the origin corresponding to the three principal axes of the body, and it follows from results in \cite{MR99} that this persists when the rigid body motion is coupled to shape deformations. Now add terms with the effect that the zero momentum relative equilibrium is no longer an equilibrium. For most deformations, the three lines deform to three non-intersecting curves as shown in Figure\,\ref{fig:deformations}\,(vi), and the branches `reconnect' in different ways according to the specific deformation. This is analogous to how the two lines in the plane with equation $xy=0$ break up and reconnect to form the two branches of a hyperbola with equation $xy=\eps$, and which half-branch connects to which depends on the sign of $\eps$.

In the rigid body, it is well known that two of the branches are Lyapunov stable and one is unstable (even linearly unstable). When coupled with shape oscillations, one of the Lyapunov stable branches becomes linearly stable (elliptic) but not necessarily Lyapunov stable (this is provided the potential energy has a local minimum as a function of shape). The stability type can be followed in the deformation of the Hamiltonian, and we show where the transitions of stability occur in the deformations; one transition (between Lyapunov stable and elliptic) occurs at the point of zero momentum, and the others occur at points on the other branches.

The paper is organized as follows: in Section\,\ref{sec:reduction} we outline the approach we use for calculating relative equilibria based on the energy-Casimir method and the splitting lemma; it is the same method used in \cite{Mo97} and other papers since.  In Section\,\ref{sec:family of RE} we state Theorem\,\ref{thm:versal} which uses singularity theory to reduce the calculations of the geometry of the family of relative equilibria for a general family $\kH$ of Hamiltonians, to those of a particularly simple family $\kG$, and we find the relative equilibria for that family. 
In Section\,\ref{sec:stability} we study the stabilities of the bifurcating relative equilibria, and in Section\,\ref{sec:rotors} we consider an example of a rigid body (such as a satellite) equipped with three rotors, one parallel to each of the principal axes of inertia to find the family of relative equilibria when the rotors are given either fixed momenta or fixed speeds of rotation.

The paper concludes with Section\,\ref{sec:singularity theory} on singularity theory; this begins with a description of Damon's $\KV$-equivalence, which is the singularity theoretic equivalence required for the proof of Theorem\,\ref{thm:versal}, and then finishes with the proof of that theorem.

\paragraph{Acknowledgements}
I would like to thank J.E.~Marsden and P.S.~Krishnaprasad for suggesting the example of the system of a rigid body with rotors, which is discussed in Section \ref{sec:rotors}. This paper was completed during a stay at the Centre Interfacultaire Bernoulli (EPFL, Lausanne) and I would like to thank Tudor Ratiu and the staff of the centre for organizing such a productive environment, and for the financial support during my stay.  I would also like to thank the referees for their helpful suggestions.

\section{Reduction and slice coordinates}
\label{sec:reduction}

Let $(\kP,\omega)$ be a symplectic manifold with a Hamiltonian action of 
$\SO(3)$, which throughout we assume to be a free action.  The momentum map is denoted $\JJ:\kP\to\so(3)^*$, which without loss of generality can be assumed to be equivariant with respect to the coadjoint action on $\so(3)^*$, \cite{Mo97,OrtegaRatiu}. Since the action is free, $\JJ$ is a submersion.  Given an element $\xi\in\so(3)$ (the Lie algebra), we write $\xi_\kP$ for the associated vector field on $\kP$. Finally, let $H:\kP\to \RR$ be a smooth $\SO(3)$-invariant function, the Hamiltonian. 

Throughout this paper we assume we are given a relative equilibrium $p_e$ of this system, with $\JJ(p_e)=0$. That is, at $p_e$ there is an element $\xi\in\so(3)$ for which the Hamiltonian vector field at $p_e$ coincides with $\xi_P(p_e)$. This is equivalent to the  group orbit $\SO(3)\cdot p_e$ being invariant under the Hamiltonian dynamics. See for example \cite{Marsden92} or \cite{BLM05} for details.

Since we are interested in existence and bifurcations of relative equilibria near $p_e$, we describe the local normal form for Hamiltonian actions near such a point, and then we will use the normal form then on.  

Since $\JJ(p_e)=0$ one has by equivariance that $\JJ(g\cdot p_e)=0$ and hence $\d\JJ(\xi_\kP(p_e))=0$ (for all $\xi\in\so(3)$).  It follows that the tangent space to the group orbit $\so(3)\cdot p_e\subset\ker\d\JJ(p_e)$. Let $\kS$ be a slice to the group orbit $\SO(3)\cdot p_e$ at $p_e$ inside the submanifold $\JJ^{-1}(0)$.  It turns out (see for example \cite{GLS96}) that the pull-back of the symplectic form to $\kS$ is non-degenerate so that $\kS$ is symplectic (at least, in  a neighbourhood of $p_e$).  Since the action is free, the normal form of Marle-Guillemin-Sternberg states that there is an $\SO(3)$-invariant neighbourhood of $p_e$ which is $\SO(3)$-symplectomorphic to an invariant neighbourhood $U$ of the point $(e,0,0)$ in the symplectic space $Y$ with momentum map $\JJ_Y:Y\to\so(3)^*$ given by,
\begin{equation}\label{eq:MGS}
\begin{array}{rcl}
Y&=&\SO(3)\times \so(3)^* \times \kS,\\[6pt]
\JJ_Y(g,\,\rho,\,v) &=& \Coad_g\rho.
\end{array}
\end{equation}
The $\SO(3)$-action on $Y$ is simply $g'\cdot(g,\,\rho,\,v) = (g'g,\,\rho,\,v)$.  Since a neighbourhood of $p_e$ in $\kP$ is diffeomorphic to $U\subset Y$, the Hamiltonian $H$ on $\kP$ defines a Hamiltonian on $U$, which we also denote by $H$. This material is standard, and can be found for instance in the book of Ortega and Ratiu \cite{OrtegaRatiu}.  {Since all results of this paper are local, from now on we replace $\kP$ by the open set $U$ in $Y$, but then denote it $\kP$. }
Here $\Coad$ is the coadjoint action of $\SO(3)$, and is defined by
$\left<\Coad_g\mu,\,\xi\right> = \left<\mu,\,\Ad_{g^{-1}}\xi\right>$; note that  $\Coad_g$ is often written $\Ad_{g^{-1}}^*$. If we consider $\mu\in\so(3)^*\simeq\RR^3$ as a column vector, then $\Coad_g\mu=g \mu$, where the latter is just matrix multiplication.

In practice $\kS$ can often be interpreted as the phase space associated to `shape space', so corresponding to vibrational motions of the system, and $\SO(3)\times\so(3)^*$ as the phase space corresponding to rotational motions. The two types of motion are of course coupled.

We now proceed to pass to the quotient by the free group action, obtaining
\begin{equation} \label{eq:quotient}
\kP/\SO(3) \simeq \so(3)^* \times \kS
\end{equation}
and the \defn{orbit momentum map} is denoted $\jj:\so(3)^*\times\kS\to \RR$ and is independent of $s\in\kS$, just as the momentum map itself
is. For $\SO(3)$, $\jj(\mu,s) = \|\mu\|^2$ for a coadjoint-invariant norm on $\so(3)^*$; {when using coordinates we take $\|\mu\|^2=\half(x^2+y^2+z^2)$.} The reduced space $\kP_\mu\subset \kP/\SO(3)$ is then
$$\kP_\mu=\jj^{-1}(\|\mu\|^2) = \kO_\mu\times\kS,$$
where $\kO_\mu\subset \so(3)^*$ is the coadjoint orbit through $\mu$, which  is the 2-sphere containing $\mu$ if $\mu\neq0$ and degenerates to a point when $\mu=0$.

\paragraph{Energy-Casimir method} Now let $H$ be an $\SO(3)$-invariant smooth Hamiltonian on $\kP$. It descends to a smooth function on the orbit space $\Hbar:\kP/\SO(3)\to\RR$.  Write $H_\mu$ for the restriction of $\Hbar$ to the reduced space $\kP_\mu$; this is called the reduced Hamiltonian on $\kP_\mu$.  Since $\Hbar(\mu,s)=H(g,\mu,s)$ (which by hypothesis is independent of $g$), from now on we abuse notation and do not distinguish $H$ from $\Hbar$.

\def\Hbar{H}

Relative equilibria of the Hamiltonian system are solutions of the Lagrange multiplier problem $\d H - \xi\,\d\JJ$ on $\kP$; moreover the Lagrange multiplier $\xi\in(\so(3)^*)^*\simeq\so(3)$ can be interpreted as the angular velocity of the relative equilibrium. Equivalently, they are critical points of the reduced Hamiltonian $H_\mu$, for the appropriate value of $\mu$. See for example \cite{Marsden92} for details.

At points where $\mu\neq0$, $\jj$ is nonsingular so the critical points of $H_\mu$ are solutions of the \emph{reduced} Lagrange multiplier problem
\begin{equation} \label{eq:LagrangeMultiplier}
\d \Hbar - \lambda\, \d\jj=0
\end{equation}
for some $\lambda\in\RR$. Since $\jj(\mu,s)=\|\JJ(g,\mu,s)\|^2$ it follows that $\d\jj=2\,\JJ\cdot\d\JJ$ and comparing the two Lagrange multiplier equations one finds that $\xi$ and $\lambda$ are related by $\xi = 2\lambda\mu$, whenever $\mu\neq0$. Note that as $\mu\to0$ one may have $\lambda\to\infty$ so allowing $\xi\neq0$ with $\mu=0$.

On the other hand, at points where $\mu=0$, the restriction $H_0$ of $\Hbar$ to $\kP_0 = \{0\}\times\kS$ has a critical point wherever $\d_s \Hbar=0$.

\begin{definition}\label{def:non-degenerate}
A relative equilibrium at $\bar p\in\kP_\mu$ is said to be \emph{non-degenerate} if the hessian $\d^2H_\mu(\bar p)$ is non-degenerate.
This is equivalent to $\d_s^2H(p)$ being non-degenerate.
\end{definition}

We will be interested in the family of relative equilibria in a neighbourhood of a non-degenerate relative equilibrium with zero momentum.  Under this non-degeneracy assumption, it follows from the implicit function theorem  that in a neighbourhood of $\bar p_e=(0,s_e)$ in $\kP/\SO(3)$ we can solve the equation
\begin{equation}\label{eq:defn of s of mu}
\d_s\Hbar(\mu,s) = 0
\end{equation}
uniquely for $s=s(\mu)$. In other words $\d_s \Hbar(\mu,s(\mu)) \equiv 0$, and these are the only zeros of $\d_s \Hbar(\mu,s)$ in a neighbourhood of $(0,s_e)$.

Now define the function,
\begin{equation}\label{eq:defn of h}
\begin{array}{rcl}
h:\so(3)^* &\longrightarrow & \RR \\
  \mu\; &\longmapsto& \Hbar(\mu,s(\mu)).
\end{array}
\end{equation}
In fact this $h$ is only defined in a neighbourhood of the origin in $\so(3)^*$, but to save on notation we will ignore that here and continue to write $h:\so(3)^*\to\RR$.

\begin{proposition}\label{prop:reduced re}
Assume $\mu\neq0$. 
The following are equivalent:
\begin{enumerate}
\item $x=(\mu,s(\mu))\in\kP/\SO(3)$ is a relative equilibrium of the Hamiltonian system,
\item the map $(h,\,\jj):\so(3)^*\to\RR^2$ is singular at $\mu$,
\item $\d h(\mu) - \lambda\,\d\jj(\mu)=0$ for some $\lambda\in\RR$
\end{enumerate}
\end{proposition}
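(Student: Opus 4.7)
The plan is to verify the three equivalences via two short arguments: (2)$\Leftrightarrow$(3) is essentially linear algebra, and (1)$\Leftrightarrow$(3) is the chain rule combined with the defining property of $s(\mu)$.

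First I would dispatch (2)$\Leftrightarrow$(3). The map $(h,\jj):\so(3)^*\to\RR^2$ is singular at $\mu$ exactly when the two covectors $\d h(\mu)$ and $\d\jj(\mu)$ are linearly dependent. Since $\jj(\mu)=\frac12\|\mu\|^2$ one has $\d\jj(\mu)\neq 0$ whenever $\mu\neq 0$, so linear dependence is equivalent to $\d h(\mu)$ being a scalar multiple of $\d\jj(\mu)$, which is precisely statement (3).

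Next, for (1)$\Leftrightarrow$(3), I would split the Lagrange multiplier equation $\d H-\lambda\,\d\jj=0$ on $\kP/\SO(3)\simeq\so(3)^*\times\kS$ into its $\mu$- and $s$-components. Because $\jj$ depends only on $\mu$, the $s$-component reads $\d_sH(\mu,s(\mu))=0$, which holds automatically by the definition of $s(\mu)$ in \eqref{eq:defn of s of mu}. The $\mu$-component reads
\begin{equation*}
\d_\mu H(\mu,s(\mu)) - \lambda\,\d\jj(\mu)=0.
\end{equation*}
On the other hand, differentiating $h(\mu)=H(\mu,s(\mu))$ using the chain rule gives
\begin{equation*}
\d h(\mu) = \d_\mu H(\mu,s(\mu)) + \d_s H(\mu,s(\mu))\cdot\d s(\mu) = \d_\mu H(\mu,s(\mu)),
\end{equation*}
where the second term vanishes by the defining property of $s(\mu)$. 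Hence the $\mu$-component of the Lagrange multiplier equation is identical to statement (3), completing the chain of equivalences.

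There is no real obstacle; the argument is essentially a verification. The only point that needs care is that the critical point equation for $H_\mu$ on the reduced space $\kP_\mu=\kO_\mu\times\kS$ really is captured by the single equation $\d H-\lambda\,\d\jj=0$ on $\kP/\SO(3)$, which is valid because $\mu\neq 0$ makes $\jj$ a submersion at $\mu$ and $\kP_\mu$ its level set. Once that is noted, the bookkeeping above gives the result directly.
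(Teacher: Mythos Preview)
Your proof is correct and follows essentially the same route as the paper's own argument: both reduce the Lagrange multiplier equation on $\kP/\SO(3)$ to its $\mu$-component using $\d_s H(\mu,s(\mu))=0$, identify $\d_\mu H(\mu,s(\mu))$ with $\d h(\mu)$, and then use that $\d\jj(\mu)\neq 0$ for $\mu\neq 0$ to pass between linear dependence and the Lagrange multiplier form. You have simply spelled out the chain-rule step and the linear-algebra step more explicitly than the paper does.
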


Note that at $\mu=0$,  (1) and (2) are equivalent, while (3) might not be. Furthermore, at any relative equilibrium, the differential $\d h(\mu) \in (\so(3)^*)^*\simeq \so(3)$, and can be identified with the velocity of the relative equilibrium (which is also an element of $\so(3)$). Details are in \cite{Mo97}.

\begin{proof}
At $x=(\mu,s(\mu))$ one has $\d \Hbar = (\d_\mu\Hbar,0) = (\d h,0)$, so (\ref{eq:LagrangeMultiplier}) is satisfied if and only if $\d h=\lambda \,\d\jj$, which is equivalent to $(h,\,\jj)$ being singular at $x$ since $\jj$ is non-singular when $\mu\neq0$.
\end{proof}

Applications of this approach can be found in \cite{MR99} (to relative equilibria of molecules) and in \cite{LMR01} (to relative equilibria of point vortices).

\paragraph{Parametrized version}
We will be interested in a parametrized family $H_u$ of $\SO(3)$-invariant Hamiltonians, with parameter $u\in U$, where $U$ is an open subset of $\RR^d$ for some $d$.
Write $\kH(z;u) = H_u(z)$ for such a family. We assume $\kH$ is a smooth $\SO(3)$-invariant function on $\kP\times U$, where $\SO(3)=\SO(3)$ acts trivially on $U$. Assume $0\in U$ and $H_0$ has a non-degenerate relative equilibrium at $p_e=(0,s_e)$.

The arguments of the previous paragraph can be extended to a parametrized family with no difficulty. For example the map $s$ obtained from solving (\ref{eq:defn of s of mu}) is now a map $s:\so(3)^*\times U\to \kS$, and one defines in the same way a smooth family of functions,
\begin{equation}\label{eq:defn of h family}
\begin{array}{rcl}
h:\so(3)^*\times U &\longrightarrow & \RR \\
  (\mu,\,u)\; &\longmapsto& \Hbar(\mu,s(\mu,u),u).
\end{array}
\end{equation}
And as in Proposition \ref{prop:reduced re}, the map $(h_u,\,\jj):\so(3)^*\to\RR^2$ is singular at $\mu\neq0$ if and only if the point $(\mu,\,s(\mu,\,u))$ is a relative equilibrium of $H_u$.

Notice that there are two types of parameter in this problem: firstly for a given Hamiltonian $H=H(g,\mu,s)$ (which is independent of $g$) there is a family of relative equilibria which is essentially parametrized by $\mu\in\so(3)^*$ (an \emph{internal} parameter), and secondly we consider a \emph{family} of such Hamiltonians, parametrized by an \emph{external} parameter $u\in U$.  The family of relative equilibria parametrized by $\mu$ (i.e.\ those of $H_\mu$) will then vary from one value of $u$ to another, and the aim of this paper is to study precisely how a particular type of family varies with an external parameter. The  particular type of family in question being one with a generic relative equilibrium with zero momentum and zero velocity deforming to one with non-zero velocity.  {It should perhaps be pointed out that there is a potential conflict of notation: $H_\mu$ is the reduced Hamiltonian (and $h_\mu$ is similar) while $H_u$ (and $h_u$) is the element of a family. Which is used at any time should be clear from the context (in fact, $H_\mu$ and $h_\mu$ are only used again in the proof of Theorem~\ref{thm:stability xi neq 0}), and in particular, the $H_0$ used in the next section refers to $H_u$ with $u=0$.}

\begin{remark}\label{rmk:other groups} 
The analysis in this paper is also valid for systems with symmetry group $\SU(2)$.  Moreover, it is also applicable to systems with compact symmetry group of rank greater than 1, provided the action on $\kP$ is free and the momentum value lies in a generic point of a wall of the Weyl chamber.  The argument is briefly as follows.   Let $G$ be a compact Lie group of rank $\ell$ say, and $p\in \kP$ is such that $\JJ(p)=\alpha\in\gg^*$ then one can find a symplectic cross section $R$ (see \cite{GLS96} or \cite{MT03}) which reduces the original system to a system invariant under $G_\alpha$. Let $Z_\alpha\lhd G_\alpha$ be its centre, which is a (submaximal) torus of dimension $r$ say. By reducing by $Z_\alpha$ one obtains a system invariant under $K:=G_\alpha/Z_\alpha$, with $\JJ_K(p)=0$ and we have $\rk(K)=\ell-r$. In particular, if $\alpha$ is a generic point of one of the reflection hyperplanes of the Weyl group action then $r=\ell-1$, so $K$ is a group of rank 1, which means it is isomorphic to either $\SO(3)$ or $\SU(2)$.

Finding a description of the geometry of the set of relative equilibria in an analogous family near points deeper in the walls of the Weyl chamber (points fixed by subgroups of the Weyl group larger than $\ZZ_2$) remains an open problem.  However, the set of relative equilibria near such a momentum value  but with generic (regular) velocity is described in \cite{Pa95} and \cite{Mo97} {(in the first of these it is shown that the set of nearby relative equilibria forms a smooth submanifold, and in the second that for each nearby momentum value, the number of relative equilibria with that momentum value is equal to the order of an appropriate Weyl group).}
\end{remark}

\section{The family of relative equilibria}
\label{sec:family of RE}

The main aim of this paper is to determine the behaviour of the family of relative equilibria in a neighbourhood of a non-transverse (relative) equilibrium when the Hamiltonian is deformed, so making it transverse in the sense of Patrick and Roberts \cite{PR00}.

For the organizing centre of our family, we consider an invariant Hamiltonian $H=H_0$ which has an \emph{equilibrium} $p_e$ with zero momentum value $\JJ(p_e)=0$, and we assume the equilibrium is non-degenerate in the sense of Definition \ref{def:non-degenerate}.  {Applying the procedure described after that definition, we have a function $h=h_0$ on $\so(3)^*$ whose linear part vanishes}. So after rotating the axes if necessary, the Taylor series of $h_0$ begins,
$$h_0(x,y,z) =  ax^2+by^2+cz^2 + O(3),$$
where $\mu=(x,y,z)$, and $O(3)$ represents terms of order 3. The genericity assumption we make throughout is that the coefficients $a,b,c$ are distinct, and we will assume $a>b>c$.

The set $\kR_0=\kR(H_0)$ of relative equilibria near $(0,0)\in\so(3)^*\times\kS$ coincides with the set of critical points of $(h_0,\,\jj):\so(3)^*\to \RR^2$. That is,
$$\kR_0 = \left\{\mu\in\so(3)^* \mid \rk F(\mu)\leq1\right\},$$
where $F(\mu)$ is the Jacobian matrix at $\mu$ of $(h_0,\,\jj)$, and $\rk F(\mu)$ is the rank of that matrix.

The aim now is to study the geometry of the set of relative equilibria of any deformation of $H_0$ and hence of $h_0$.  Consider any family of $\SO(3)$-invariant Hamiltonian systems containing $H_0$ as above, and let $\kH$ be the resulting  deformation of $H_0$ parametrized by $u\in U$, and for each $u\in U$ write $h_u$ for the corresponding reduced Hamiltonian on $\so(3)^*$ as constructed above---see (\ref{eq:defn of h}).

To study the family $\kR(\kH)$ of relative equilibria of any given family $\kH$, we describe an auxilliary `universal' 3-parameter family $\kG$ of functions $G_{\bma}$ on $\so(3)^*$, with $\bma\in\RR^3$, based on the function $G_0$ which is just the quadratic part of the given $h_0$.  Using singularity theory techniques we show that the family $\kR(\kH)$ of relative equilibria for the given deformation $\kH$ is the inverse image under a smooth map $\varphi$ of the family $\kR(\kG)$ of `relative equilibria' of $\kG$ (that is of critical points of $(G_{\bma},\jj)$), and then in Section~\ref{sec:study-univ-family} below we study the geometry of this universal family.

Thus, given $h_0(\mu)=ax^2+by^2+cz^2+O(3)$ as above, with $a>b>c$, we define
$$G_0(\mu) = ax^2+by^2+cz^2$$
where $\mu=(x,y,z)$, and a deformation
\begin{equation}\label{eq:G family}
\kG(x,y,z;\,\alpha,\beta,\gamma) = G_0(x,y,z) + \alpha x +\beta y+\gamma z\,;
\end{equation}
for brevity we write $\bma=(\alpha,\beta,\gamma)$ for the parameter, so we have $G_{\bma}(\mu) = \kG(\mu;\,\bma)$. Although the functions $G_{\bma}$ are an artefact of the problem, and not related directly to the dynamics of $H_u$, we do refer to their relative equilibria as if they did arise as Hamiltonians. Notice that the relative equilibria of the family $\kG$ correspond to critical points of $(G_{\bma},\,\jj)$, {which by Lagrange multiplier theory are the points of tangency of the ellipsoid/hyperboloid $G_{\bma}(\mu)=\mathrm{const}.$ with the sphere $\jj=\mathrm{const}$.}. 

\begin{theorem}\label{thm:versal}
Let $H_0$ be an $\SO(3)$-invariant Hamiltonian with zero linear part as above (in particular, $a,b,c$ distinct), and let $\kH$ be an $\SO(3)$-invariant deformation of $H_0$ with parameter space $U$.   With the construction above, defining the family\/ $\kG$ from $H_0$, there exists a neighbourhood $U'$ of\/ $0$ in $U$, and a smooth map
\begin{eqnarray*}
\Phi:\so(3)^*\times U'&\longrightarrow& \so(3)^*\times\RR^3\\
 (\mu,\,u) &\longmapsto & (\Phi_1(\mu,\,u),\,\varphi(u))
\end{eqnarray*}
with $\varphi(0)=0$, such that $(\mu,u)\in\kR(\kH)$ if and only if\/ $\Phi(\mu,u)\in\kR(\kG)$.

In particular, if for each $u\in U'$, we define $\Phi_u:\so(3)^*\to \so(3)^*$, by  $\Phi_u(\mu) := \Phi_1(\mu,u)$ then $\Phi_u$ is a diffeomorphism which identifies the set $\kR(H_u)$ with the set $\kR(G_{\varphi(u)})$.
\end{theorem}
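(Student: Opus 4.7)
My plan is to apply Damon's $\KV$-equivalence of unfoldings---the machinery presented in Section \ref{sec:singularity theory}---to exhibit the general deformation $\kH$ as a pullback of the 3-parameter model $\kG$.

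\textbf{Reformulation.} By Proposition \ref{prop:reduced re}, the set $\kR(H_u)$ is cut out of $\so(3)^*$ (for $\mu \neq 0$, and trivially through $\mu = 0$ since $\d\jj(0)=0$) by the vanishing of the three $2 \times 2$ minors of the Jacobian of $(h_u, \jj)$. Packaging these as a single map,
\[
F: \so(3)^* \times U \longrightarrow \RR^3, \qquad F(\mu, u) := \d h_u(\mu) \times \mu,
\]
one has $\kR(\kH) = F^{-1}(0)$, and similarly $\kR(\kG) = \tilde F^{-1}(0)$ for $\tilde F(\mu, \bma) := \d G_{\bma}(\mu) \times \mu$. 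The theorem will follow once $F$ is shown to be induced, via a source-diffeomorphism $\Phi_1$ and a parameter change $\varphi$, from $\tilde F$ under an equivalence that preserves the zero-fibre.

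\textbf{Choice of equivalence.} Plain $\kK$-equivalence is inadequate: the organising centre $\tilde G_0 = 2\bigl((b{-}c)\,yz,\;(c{-}a)\,xz,\;(a{-}b)\,xy\bigr)$ is not $\kK$-finitely determined, since its component ideal $(yz, xz, xy)$ has infinite colength---every pure power $x^n, y^n, z^n$ survives in the quotient. The remedy is Damon's $\KV$-equivalence with a target variety $V \subset \RR^3$ engineered so that $V$-preimages coincide with relative-equilibrium sets. The appropriate $V$ is set up in Section \ref{sec:singularity theory}; with it, $\tilde G_0$ becomes $\KV$-finitely determined.

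\textbf{Versality of $\kG$.} The technical heart is to show that $\kG$ is a $\mathcal{K}_{V,e}$-versal unfolding of $\tilde G_0$. This reduces to computing the normal space
\[
N \;:=\; \theta(\tilde G_0) \,\big/\, T\mathcal{K}_{V,e}\,\tilde G_0
\]
inside $\theta(\tilde G_0)$ and checking that the three infinitesimal deformations
\[
\partial_\alpha \tilde F\big|_{\bma = 0} \,=\, e_1 \times \mu, \qquad \partial_\beta \tilde F\big|_{\bma = 0} \,=\, e_2 \times \mu, \qquad \partial_\gamma \tilde F\big|_{\bma = 0} \,=\, e_3 \times \mu
\]
project to a basis of $N$, with $\dim N = 3$. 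The distinctness hypothesis $a > b > c$ enters essentially here: any coincidence among the coefficients would enlarge $N$ and force additional unfolding parameters.

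\textbf{Conclusion and main obstacle.} Once versality is established, Damon's unfolding theorem yields the smooth maps $\Phi_1, \varphi$ with $\varphi(0) = 0$ and $\Phi_1(\cdot, 0) = \mathrm{id}$; then $\Phi_u := \Phi_1(\cdot, u)$ is a local diffeomorphism for $u$ near $0$, and the zero-fibre-preserving nature of $\KV$-equivalence gives $\Phi_u(\kR(H_u)) = \kR(G_{\varphi(u)})$. The principal hurdle is the normal-space calculation: one must identify the right $V$, write down $T\mathcal{K}_{V,e}\,\tilde G_0$ explicitly, and verify that it has codimension exactly three with the three cross products $e_i \times \mu$ as a complement. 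Every other step---reformulation, application of the versality theorem, and final identification---is routine once this local-algebra computation is carried out.
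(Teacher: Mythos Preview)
Your overall strategy---recast as a $\KV$-problem, compute the extended tangent space, show $\kG$ is versal, invoke Damon---is exactly the paper's. But your reformulation puts the map in the wrong target and thereby makes the key computation impossible.

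You package the singular-set condition as $F(\mu,u)=\d h_u(\mu)\times\mu\in\RR^3$ and then look for a variety $V\subset\RR^3$ so that $\KV$-equivalence works. With this $F$ one has $\kR=F^{-1}(0)$, so the only $V$ that recovers $\kR$ as $F^{-1}(V)$ is $V=\{0\}$, i.e.\ plain $\kK$-equivalence---which you yourself observe fails (infinite codimension). Your appeal to ``the appropriate $V$ set up in Section~\ref{sec:singularity theory}'' is a mismatch: the paper's $V$ lives in $\Mat(2,3)$, not $\RR^3$. There is a further symptom that the setup is off: your proposed unfolding directions $e_i\times\mu$ vanish at $\mu=0$, hence lie in $\maxid_X\theta(F)$; for any $V$ with $0\in V$ one has $T\kK_{V,e}\cdot\tilde G_0\subset\maxid_X\theta(\tilde G_0)$ (because $\tilde G_0$ is homogeneous of degree~2 and vector fields tangent to $V$ vanish at $0$), so the three constant directions in $\theta(\tilde G_0)$ survive in the normal space and cannot be hit by the $e_i\times\mu$.

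The fix, and the paper's actual move, is \emph{not} to take minors. One keeps the full Jacobian
\[
F(\mu)=\d(h,\jj)(\mu)=\begin{pmatrix}h_x&h_y&h_z\\ x&y&z\end{pmatrix}\in\Mat(2,3)
\]
and sets $V=\{A\in\Mat(2,3):\rk A\le1\}$. The module $\theta_V$ is then generated by the $\gl(2)\times\gl(3)$-action, and with $h_0=ax^2+by^2+cz^2$ one computes directly that $T\kK_V\cdot F=\maxid_X\theta(F)$ and that the extended tangent space misses precisely the three constant matrices $\left(\begin{smallmatrix}1&0&0\\0&0&0\end{smallmatrix}\right)$, $\left(\begin{smallmatrix}0&1&0\\0&0&0\end{smallmatrix}\right)$, $\left(\begin{smallmatrix}0&0&1\\0&0&0\end{smallmatrix}\right)$---which are exactly $\partial F_{\bma}/\partial\alpha,\partial F_{\bma}/\partial\beta,\partial F_{\bma}/\partial\gamma$. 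Versality and 1-determinacy follow, and Damon's theorems give $\Phi$ and $\varphi$. The passage to minors throws away precisely the extra structure (the row/column operations on the Jacobian) that makes the problem finitely determined.
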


In other words, the family $\kG$ provides in some sense a \emph{versal} deformation of $H_0$, and in particular the set $\kR(H_u)$ of relative equilibria of a perturbation $H_u$ of $H_0$ is diffeomorphic to the set of relative equilibria of $G_{\varphi(u)}$.  The precise sense of `versal' here is with respect to the $\kK_V$-equivalence of the map $F=\d(h,\,\jj)$, where $V$ is the set of $2\times3$ matrices of rank at most 1.  A description of this equivalence and the proof of the theorem are given in Section \ref{sec:singularity theory}.

The structure of the set $\kR(\kH)$ is therefore derived from that of $\kR(\kG)$, and the geometry of the latter is described in the remainder of this section.  The stabilities of the relative equilibria arising in the perturbed Hamiltonians $H_u$ are discussed in Section\,\ref{sec:stability}.

\subsection{Study of the `universal' family \texorpdfstring{$\kG$}{G}} \label{sec:study-univ-family}
Let $\kG(x,y,z)$ be as given in (\ref{eq:G family}), with $a>b>c$ distinct, as in the theorem above. The relative equilibria $\kR(\kG)$ for this family occur at points where $(G_{\bma},\jj)$ is singular, by Proposition \ref{prop:reduced re}, where $\bma=(\alpha,\beta,\gamma)$. Then
$$\RE(\kG) = \left\{(\mu,\,\bm{\alpha})\in\RR^6\mid
\rk\,F_{\bma}(\mu)\leq1\right\}.$$
where $F_{\bma}$ is the Jacobian matrix of $(G_{\bma},\,\jj)$:
\begin{equation}
  \label{eq:F-alpha}
F_{\bma}(x,y,z) = \pmatrix{2ax+\alpha & 2by+\beta & 2cz+\gamma \cr x & y & z }.  
\end{equation}
Here we have taken $\jj(\mu)=\|\mu\|^2=\half(x^2+y^2+z^2)$. 
Thus $\kR(\kG)$  is given by the vanishing of the three minors of the matrix, so by the three equations
$$\begin{array}{lcl}
2(b-c)yz + \beta z - \gamma y &=& 0\\
2(c-a)zx + \gamma x -\alpha z &=& 0\\
2(a-b)xy + \alpha y -\beta x &=& 0.
\end{array}$$
If these three minors are denoted $A,B$ and $C$ respectively, then there is an algebraic relation, namely $xA + yB + zC =0$ (as there is between the minors of any matrix).

When $\bma=0$, the equations become $xy=yz=zx=0$, whose solutions form the three coordinate axes, see Figure\,\ref{fig:deformations}\,(i) (the colours in the figure refer to stability of different branches, which is discussed in Section\,\ref{sec:stability}).

For each $\bma=(\alpha,\,\beta,\,\gamma)\in\RR^3$, we have the smooth map $F_{\bma}:\RR^3\to \Mat(2,3)$ (the space of $2\times3$ matrices). Notice that the origin in $\RR^6$ is the only point $(\xx,\bma)$ where $F(\xx,\bma)$ is a matrix of rank less than 1.  Let $V\subset\Mat(2,3)$ consist of those matrices of rank at most 1, and let $V^\circ$ denote its relative interior; that is, the set of matrices of rank equal to 1. So $V$ is the union of $V^\circ$ and the zero matrix. The set of relative equilibria for $G_{\bma}$ is 
$\kR_{\bma} = F_{\bma}^{-1}(V).$

Now, $V^\circ$ is a (locally closed) submanifold of $\Mat(2,3)$ of dimension 4 and codimension 2; indeed the Lie group $\GL(2)\times\GL(3)$ acts by change of bases on $\Mat(2,3)$ and has three orbits corresponding to the rank of the matrix: the origin, $V^\circ$ and $\Mat(2,3)\setminus V$.  If we can show that $F$ or $F_{\bma}$ is transverse to $V$, then it follows that away from the origin $\kR(\kG)$ is a submanifold of $\RR^6$, or respectively $\kR(G_{\bma})$  is a submanifold of $\so(3)^*\simeq\RR^3$, in both cases of codimension 2.

\begin{lemma}
\begin{enumerate}
\item The map $F:\RR^6\to \Mat(2,3)$ is an invertible linear map and hence transverse to $V$; consequently $\kR(\kG)$ is smooth (of dimension 4) except at the origin.
\item For $\bma=(\alpha,\beta,\gamma)\in\RR^3$, the map $F_{\bma}:\RR^3\to\Mat(2,3)$ is transverse to $V$ if and only if\/ $\alpha,\beta$ and $\gamma$ are all nonzero.
\end{enumerate}
\end{lemma}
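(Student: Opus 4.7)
The plan is to handle (1) via a linear-isomorphism observation and (2) by recasting transversality as a rank condition on the gradients of the three $2\times2$ minors of $F_{\bma}$. For (1), $F:\RR^6\to\Mat(2,3)$ is $\RR$-linear between spaces of equal dimension; vanishing of its bottom row forces $\mu=0$, and then the top row forces $\bma=0$, so $F$ is a linear isomorphism, hence a submersion, hence transverse to any submanifold of $\Mat(2,3)$. Consequently $F^{-1}(V^\circ)$ is a smooth submanifold of $\RR^6$ of codimension $2$ (dimension $4$), and $F^{-1}(0)=\{0\}$ because $F$ is bijective.

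For (2), I would rephrase transversality of $F_{\bma}$ to $V^\circ$ at a preimage $\mu$ as smoothness of codimension $2$ of $F_{\bma}^{-1}(V^\circ)$ near $\mu$. Since locally on $V^\circ$ any two of the three $2\times2$ minors of a matrix serve as defining equations, and their pullbacks by $F_{\bma}$ are exactly $A,B,C$, this is the condition that $\{\nabla A(\mu),\nabla B(\mu),\nabla C(\mu)\}$ has rank $2$. The syzygy $xA+yB+zC\equiv0$ noted in the excerpt, upon differentiation at a point where $A=B=C=0$, gives $x\,\nabla A+y\,\nabla B+z\,\nabla C=0$; so the gradients automatically have rank at most $2$ when $\mu\ne0$, and the task reduces to ruling out rank $\le1$.

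I would treat the origin separately: $F_{\bma}(0)\in V^\circ$ iff $\bma\ne0$, and a short computation shows that the matrix with columns $\nabla A(0),\nabla B(0),\nabla C(0)$ is the skew-symmetric $3\times3$ matrix associated to the vector $(\alpha,\beta,\gamma)$, of rank $2$ precisely when $\bma\ne0$. For $\mu\ne0$ and $\alpha,\beta,\gamma$ all nonzero, I would first observe that any rel.\ eq.\ off the origin must have every coordinate nonzero: setting $z=0$ in the defining equations reduces them to $A=-\gamma y$ and $B=\gamma x$, forcing $x=y=0$. Then $z\ne0$ together with the syzygy reduces the problem to showing $\nabla A$ and $\nabla B$ are linearly independent, and a short case analysis eliminates the failures. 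If $\nabla A=0$ one gets $y=\beta/(2(a-b))$, $z=-\gamma/(2(c-a))$, whence $B=-\alpha z\ne0$, contradicting that $\mu$ is a rel.\ eq.; if $\nabla A\parallel\nabla B$ with both nonzero, then $2(c-a)z+\gamma=2(b-c)z-\gamma=0$, whose sum $2(b-a)z=0$ contradicts $z\ne0$ since $a\ne b$.

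For the converse, assume without loss of generality $\gamma=0$. Then $\mu_*:=\bigl(\alpha/(2(c-a)),\,-\beta/(2(b-c)),\,0\bigr)$ is a rel.\ eq.: $A(\mu_*)=B(\mu_*)=0$ follow from $z_*=\gamma=0$, and $C(\mu_*)=0$ by direct computation using the identity $(a-b)+(b-c)+(c-a)=0$. At $\mu_*$ both $\nabla A$ and $\nabla B$ vanish, so the gradient matrix has rank at most $1$ and transversality fails there. The main obstacle I anticipate is simply the bookkeeping in these subcases; each is a short calculation using only that $a,b,c$ are distinct.
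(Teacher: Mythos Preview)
Your overall strategy is sound and differs from the paper's. The paper argues directly with the tangent space $T_AV=\{B\mid \qq BK=0\}$ (where $\qq A=0$ and $\mathrm{im}\,K=\ker A$) and then checks, case by case, that the image of $\d F_{\bma}$ together with $T_AV$ spans $\Mat(2,3)$. You instead factor through the minor map $\Phi:\Mat(2,3)\to\RR^3$, so that transversality to $V^\circ$ at $\mu$ becomes the rank-$2$ condition on the Jacobian of $(A,B,C)=\Phi\circ F_{\bma}$. This is a clean and more elementary reformulation, and your use of the differentiated syzygy to reduce to $\{\nabla A,\nabla B\}$ when $z\ne0$ is nice. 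One phrasing issue: transversality is not literally the same as ``$F_{\bma}^{-1}(V^\circ)$ is smooth of codimension $2$''; the correct equivalence is exactly the rank condition you then state, so just drop the intermediate sentence.

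There is, however, a genuine slip in the $\nabla A=0$ subcase. From
\[
\nabla A=\bigl(0,\;2(b-c)z-\gamma,\;2(b-c)y+\beta\bigr)=0
\]
you should get $z=\gamma/\bigl(2(b-c)\bigr)$ and $y=-\beta/\bigl(2(b-c)\bigr)$, not the values you wrote (your values satisfy $B_x=C_x=0$, i.e.\ the first \emph{column} of the Jacobian vanishes, not the first row). With your incorrect $z$ the coefficient of $x$ in $B$ conveniently cancels, giving $B=-\alpha z$; with the correct $z$ it does not, and $B$ still depends on $x$, so that line of argument fails. The easiest fix is simpler than what you attempted: with the correct $y,z$ one computes directly
\[
A=2(b-c)yz+\beta z-\gamma y=\frac{\beta\gamma}{2(b-c)}\ne0,
\]
contradicting $A=0$. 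The same device handles $\nabla B=0$ (which you should also mention explicitly, or appeal to symmetry). Your treatment of the parallel case and of the converse (the point $\mu_*$ when $\gamma=0$) is correct; note that the paper exhibits the analogous failure point for $\alpha=0$, namely $\bigl(0,\,\beta/(2(a-b)),\,\gamma/(2(a-c))\bigr)$, obtained by the same mechanism.
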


This lemma tells us that the \emph{discriminant} $\Delta=\Delta(\kG)$ of the family $\kG$ is the subset of the unfolding space $\RR^3$ where $\alpha\beta\gamma=0$, which is the union of the three coordinate planes, see Figure\,\ref{fig:discriminant}, and for $\bma\not\in\Delta$ the set $\kR_{\bma}$ of relative equilibria is a smooth 1-dimensional submanifold of $\so(3)^*$; that is, it is a union of smooth curves.

\begin{figure} 
\centering\psset{unit=0.9}
\begin{pspicture}(-3,-2.5)(3,3)
 \psline(-2,0)(-2,2)(2,2)(2,-2)(0,-2)
   \psline(-2,-0.5)(-2,-2)(-1,-2)
 \psline(0,-2)(-1,-2.5)(-1,1.5)(1,2.5)(1,2)
 \psline(-2,0)(-3,-0.5)(1,-0.5)(3,0.5)(2,0.5)
 \rput(1.7,-1.7){$\Delta_2$}
 \psline{->}(0,0)(0,2.5) \rput(-0.2,2.6){$\gamma$} \psline(0,-0.5)(0,-2)
 \rput(0.3,1){$\Delta_1$}
 \psline{->}(0,0)(2.5,0) \rput(2.6,-0.2){$\beta$} \psline(-1,0)(-2,0)
 \psline{->}(0,0)(-1.5,-0.75) \rput(-1.4,-0.95){$\alpha$}
\end{pspicture}
\caption{The discriminant $\Delta$ in parameter space is the union of three planes} \label{fig:discriminant}
\end{figure}
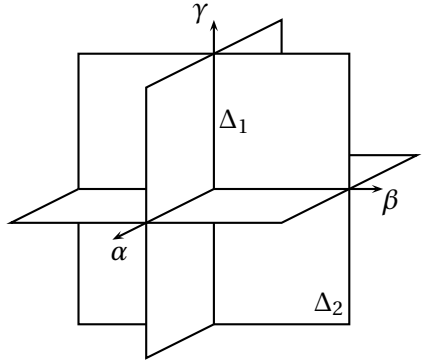

\begin{proof} (1) This is obvious from (\ref{eq:F-alpha}) above.

(2) Let $\hat\xx=(\hat x,\,\hat y,\, \hat z)\in T_{\mu}\RR^3\simeq\RR^3$. Then
$$\d F_{\bma}(\hat\xx) = \pmatrix{a\hat x & b\hat y & c\hat z \cr \hat x & \hat y& \hat z}.$$
Now, if $A\in V^\circ$ then $\rk A=1$ and the tangent space to $V$ at $A$ is
$$T_AV = \{B\in\Mat(2,3) \mid \qq BK=0\}$$
where $\qq\in\RR^2$ is a non-zero row-vector with $\qq A=0$, and $K$ is a $3\times2$ matrix whose image spans $\ker A$.
To see this, let $A(t)$ be a smooth curve in $V$ with tangent vector $B$ at $A=A(0)$, and let $\qq(t)$ and $K(t)$ be the corresponding row-vector and matrix. Differentiating the condition $\qq(t)A(t)=0$ gives $\dot\qq A+\qq B=0$ and multiplying on the right by $K$ implies $\qq BK=0$; this shows that $T_AV$ is a subset of the $B$ with $\qq BK=0$, and a dimension count shows they are in fact equal.

Because $\d F_{\bma}$ is injective, in order to show $F_{\bma}$ is transverse to $V$ one needs to find two independent vectors $\hat\xx_1, \hat\xx_2$ such that $B_j := \d F_{\bma}(\hat\xx_j)\not\in T_AV,\;(j=1,2)$. The choice of the $\hat\xx_j$ depends on the point $\mu$ in question.
A series of straightforward calculations in different cases ($x\neq0$, $\alpha\neq0$,\dots) show that indeed for $\bma\not\in\Delta$,  $F_{\bma}$ is transverse to $V$ and $F_{\bma}(\mu)\neq0$. If on the other hand $\alpha=0$ but $\beta\gamma\neq0$ then $F_{\bma}$ fails to be transverse to $V$ at the point
$$\mu=\left(0,\,\frac{\beta}{2(a-b)},\, \frac{\gamma}{2(a-c)}\right).$$
which is therefore the singular point of $F_{\bma}^{-1}(V)$---it is in fact a crossing of two components of the curve. A similar scenario occurs if $\beta=0,\,\alpha\gamma\neq0$ or if $\gamma=0,\,\alpha\beta\neq0$.

Furthermore, if $\alpha=\beta=0$ but $\gamma\neq0$ then there are two points where transversality fails:
\begin{equation}\label{eq:pitchfork points}
\mu=\left(0,\,0,\,\frac{\gamma}{2(b-c)}\right),\quad\mbox{and}\quad \mu=\left(0,\,0,\,\frac{\gamma}{2(a-c)}\right).
\end{equation}
Similarly if $\beta=\gamma=0$ or $\alpha=\gamma=0$ there are two singular points, given by analogous expressions.  In the case $\alpha=\gamma=0$ the two pitchfork points arise one on each side of the origin, at
\begin{equation}\label{eq:pitchfork points2}
\mu=\left(0,\,-\frac{\beta}{2(b-c)},\,0\right),\quad\mbox{and}\quad \mu=\left(0,\,\frac{\beta}{2(a-b)},\,0\right).
\end{equation}
as shown in Fig.~\ref{fig:deformations}\,(iv). Recall that we take $a>b>c$.
\end{proof}

With $\Delta$ the discriminant, let $\Delta_0 = \{0\}$, $\Delta_1$ be the set consisting of the points where precisely two of the planes intersect (ie, the union of the three axes without $\Delta_0$), and $\Delta_2$ the remaining points of $\Delta$. Then,
 $$\Delta = \Delta_0 \cup \Delta_1 \cup \Delta_2,$$
this being a disjoint union. $\Delta_1$ has 6 connected components, while $\Delta_2$ has 12. The geometry of the singular set of the deformation $(G_{\bma},\jj)$ (that is, the set of relative equilibria) depends on which stratum $\bma$ is in, as shown in the proof above. The descriptions are as follows (refer to Figure\,\ref{fig:deformations}).

\begin{figure}[t]
  \psset{unit=0.85,linewidth=1.5pt}
\centering

\begin{pspicture}(-2,-3)(3,2) 
 \psline[linecolor=Lyapunov](0,-1.7)(0,2)
 \psline[linecolor=Unstable](-2,0)(2,0)
 \psline[linecolor=Elliptic](-1.4,-0.7)(1.4,0.7)
 \psdot(0,0)
 \rput(0,-2.5){(i) $\bma = (0,0,0)$}
\end{pspicture}
\quad
\begin{pspicture}(-3,-3)(2,2) 
 \psline[linecolor=Lyapunov](-0.6,-2)(-0.6,1.6)
 \psline[linecolor=Elliptic](1.4,0.7)(0,0)
   \psline[linecolor=Lyapunov](0,0)(-0.6,-0.3)
   \psline[linecolor=Unstable](-0.6,-0.3)(-1.2,-0.6)
   \psline[linecolor=Elliptic](-1.2,-0.6)(-2,-1)
 \pscircle[fillcolor=white,linecolor=white,fillstyle=solid](-0.6,-0.6){0.1}%
 \psline[linecolor=Unstable](-3.2,-0.6)(0.8,-0.6)
 \psdot(0,0)
 \rput(-0.6,-2.5){(ii) $\bma=(\alpha,0,0)$}
\end{pspicture}
\quad
{\begin{pspicture}(-2,-3)(2,2.5) 
 \psline[linecolor=Lyapunov](0,0)(0,2)
 \psline[linecolor=Elliptic](0,0)(0,-0.5)
 \psline[linecolor=Unstable](0,-0.5)(0,-1)
 \psline[linecolor=Lyapunov](0,-1)(0,-2.2)
  \psline[linecolor=Unstable](-2,-1)(2,-1)
  \pscircle[fillcolor=white,linecolor=white,fillstyle=solid](-1,-1){0.3}
 \psline[linecolor=Elliptic](-1.6,-1.3)(1.4,0.2)
 \psdot(0,0)
 \rput(0,-2.5){(iii) $\bma=(0,0,\gamma)$}
\end{pspicture}}

\begin{pspicture}(-3,-4.5)(2,2) 
 \psline[linecolor=Unstable](-2,0)(-0.5,0)
   \psline[linecolor=Lyapunov](-0.5,0)(0,0)
   \psline[linecolor=Elliptic](0,0)(0.7,0)
   \psline[linecolor=Unstable](0.7,0)(2,0)
 \psline[linecolor=Lyapunov](-0.5,-1.7)(-0.5,2)
  \pscircle[fillcolor=white,linecolor=white,fillstyle=solid](-0.5,-0.6){0.15}%
 \psline[linecolor=Elliptic](-0.7,-0.7)(2.1,0.7)
 \psdot(0,0)
 \rput(-0.6,-3.5){(iv) $\bma=(0,\beta,0)$}
\end{pspicture}
{\begin{pspicture}(-4,-5)(3,2) 
 \psplot[linecolor=Lyapunov]{0.18}{0.7}{x 0.1 sub -1 exp x mul -1.5 mul} 
 \psplot[linecolor=Unstable]{0.7}{2}{x 0.1 sub -1 exp x mul -1.5 mul} 
 \psplot[linecolor=Lyapunov]{0}{0.05}{x 0.1 sub -1 exp x mul -1.5 mul}
 \psplot[linecolor=Elliptic]{-0.1}{0}{x 0.1 sub -1 exp x mul -1.5 mul}
 \psplot[linecolor=Unstable]{-2.2}{-0.1}{x 0.1 sub -1 exp x mul -1.5 mul}
  \pscircle[linecolor=white,fillcolor=white,fillstyle=solid](-1.6,-1.4){0.2}
 \psline[linecolor=Elliptic](-2,-1.6)(1.3,-0.05) 
 \psdot(0,0)
 \rput(0,-4){(v) $\alpha=0,\; \gamma>\beta>0$}
\end{pspicture}}
{\begin{pspicture}(-3,-5)(2,2) 
 \parametricplot[linecolor=Elliptic]{-2.0}{-0.3}{t 0.1 add -1 exp t mul -0.2 mul t add t 2 mul 0.1 add -1 exp t mul -0.5 mul t 0.5 mul add}
 \parametricplot[linecolor=Unstable]{-0.3}{-0.11}{t 0.1 add -1 exp t mul -0.2 mul t add t 2 mul 0.1 add -1 exp t mul -0.5 mul t 0.5 mul add}
 \parametricplot[linecolor=Unstable]{-0.09}{-0.07}{t 0.1 add -1 exp t mul -0.2 mul t add t 2 mul 0.1 add -1 exp t mul -0.5 mul t 0.5 mul add}
 \parametricplot[linecolor=Lyapunov]{-0.07}{-0.055}{t 0.1 add -1 exp t mul -0.2 mul t add t 2 mul 0.1 add -1 exp t mul -0.5 mul t 0.5 mul add}
 \parametricplot[linecolor=Lyapunov]{-0.043}{0}{t 0.1 add -1 exp t mul -0.2 mul t add t 2 mul 0.1 add -1 exp t mul -0.5 mul t 0.5 mul add}
 \parametricplot[linecolor=Elliptic]{0}{2.0}{t 0.1 add -1 exp t mul -0.2 mul t add t 2 mul 0.1 add -1 exp t mul -0.5 mul t 0.5 mul add}
 \psdot(0,0)
 \rput(0,-4){(vi) $\gamma>\beta>\alpha>0$}
\end{pspicture}}
\caption{Deformations of $\kR_0$. Each diagram shows $\kR_{\bma}$ (up to diffeomorphism) for the corresponding values of $\bma$. The large dot on one of the curves in each diagram represents the origin $\mu=0$.
(i) corresponds to $\bma=0$, (ii), (iii) and (iv) to $\bma$ in three different components of $\Delta_1$, (v) to $\bma$ in one of the components of $\Delta_2$ and (vi) to $\bma\not\in\Delta$. The colours of the branches refer to their stability: red for Lyapunov stable, green for elliptic and brown for linearly unstable, see  Sec.\,\ref{sec:stability}.} \label{fig:deformations}
\end{figure}
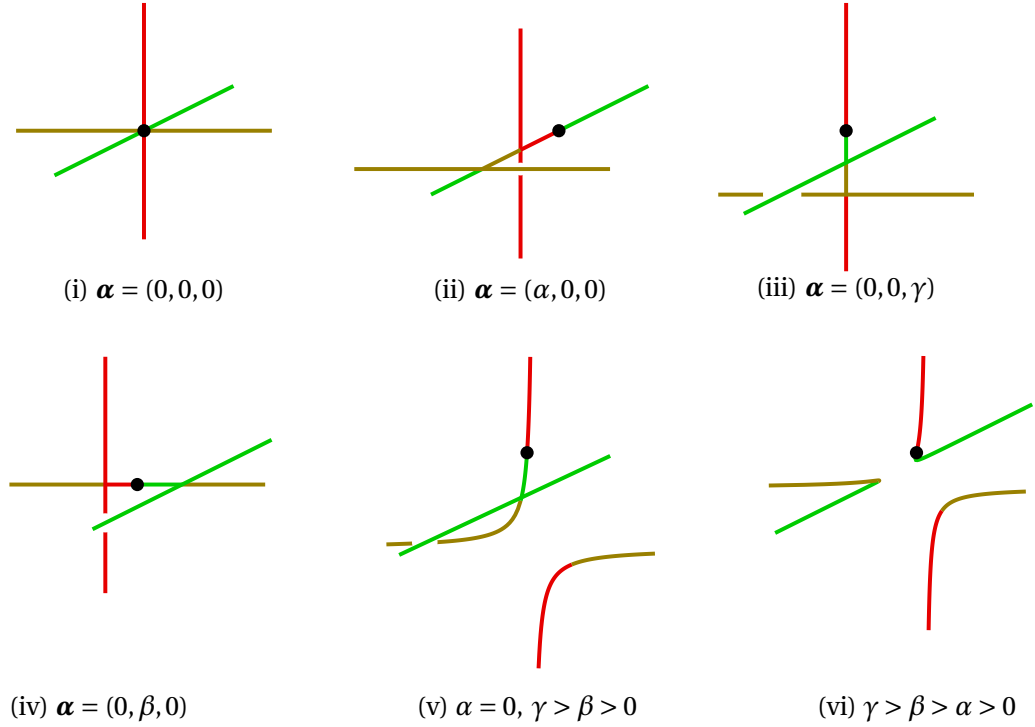

\begin{itemize}
\item For $\bma\not\in\Delta$ (so away from the discriminant: i.e.\ a generic deformation) the set $\RE_{\bma}$ of relative equilibria is formed of three smooth disjoint curves; see Figure\,\ref{fig:deformations}\,(vi). (The almost-corners in the figure are artefacts of the projection.) Before deformation when $\bma=0$, there are 3 lines through the origin or 6 `rays', and on deformation the rays are reconnected in such a way that opposite rays are no longer connected together.  There are 8 possible ways this can be done, corresponding to the 8 components in the complement of the discriminant (the 8 octants).  The origin must always lie on one of the 3 curves and is marked by the dot in the figure. For such a Hamiltonian, for small values of $\mu$ there are precisely two relative equilibria, both lying on the component passing through $0$, and as $\|\mu\|$ is increased there are two saddle-centre bifurcations, each creating a pair of relative equilibria. These bifurcations occur at the points closest to the origin on each of the other two components (i.e., where the sphere $\kO_\mu$ of the appropriate radius first touches the curve as $|\mu|$ increases from 0). See Fig.\,\ref{fig:bifurcations} for an illustration of the different types of bifurcation.

\item For $\bma\in\Delta_2$ ---a generic point of the discriminant--- two of the branches of $\RE_{\bma}$ intersect at a single singular point; see Figure\,\ref{fig:deformations}\,(v). The system determined by such a deformation undergoes a pitchfork bifurcation at this singular point and a saddle-centre bifurcation on the other branch as $\|\mu\|$ is increased from 0. 

\item For $\bma\in\Delta_1$ there are two crossings in $\RE_{\bma}$: the curve through $\mu=0$ meets both the other curves (at different points). There are therefore two pitchfork bifurcations in this system as $\|\mu\|$ is increased from 0. Since $a,b,c$ are distinct it follows from equation (\ref{eq:pitchfork points}) that the two bifurcations occur at different values of $\|\mu\|$. See Figures\,\ref{fig:deformations}\,(ii)--(iv).
\end{itemize}

Theorem \ref{thm:versal} shows that the family or relative equilibria for $\kH$ is diffeomorphic to that from $\kG$ (or more generally can be induced from it by a map $\varphi$ on parameters). However, this does not imply that where one has, for example, a sub-critical pitchfork so does the other---in fact it is not straightforward to match the stability types.  This is however proved in Theorem \ref{thm:stability xi=0}, so the conclusions above are relevant to a wider class of $\kH$ and not just to $\kG$. 

\begin{figure}[tp]
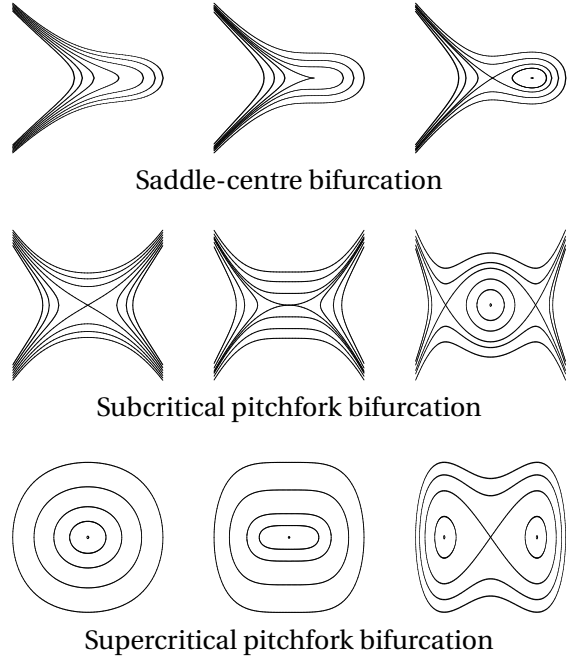

 \centering 
 \includegraphics[height=2cm,width=2cm]{fish1.ps}\qquad
 \includegraphics[height=2cm,width=2cm]{fish0.ps}\qquad
 \includegraphics[height=2cm,width=2cm]{fish-1.ps}

Saddle-centre bifurcation

\vskip 5mm
 
 \includegraphics[height=2cm,width=2cm]{subpitch-1.ps}\qquad
 \includegraphics[height=2cm,width=2cm]{subpitch0.ps}\qquad
 \includegraphics[height=2cm,width=2cm]{subpitch1.ps}

 \centerline{Subcritical pitchfork  bifurcation}

\vskip 5mm

 \includegraphics[height=2cm,width=2cm]{suppitch1.ps}\qquad
 \includegraphics[height=2cm,width=2cm]{suppitch0.ps}\qquad
 \includegraphics[height=2cm,width=2cm]{suppitch-1.ps}
 \centerline{Supercritical pitchfork bifurcation}
\caption{Illustration of the three bifurcation types that occur}
\label{fig:bifurcations}
\end{figure}

\subsection{The energy-momentum discriminant of \texorpdfstring{$\kG$}{G}}


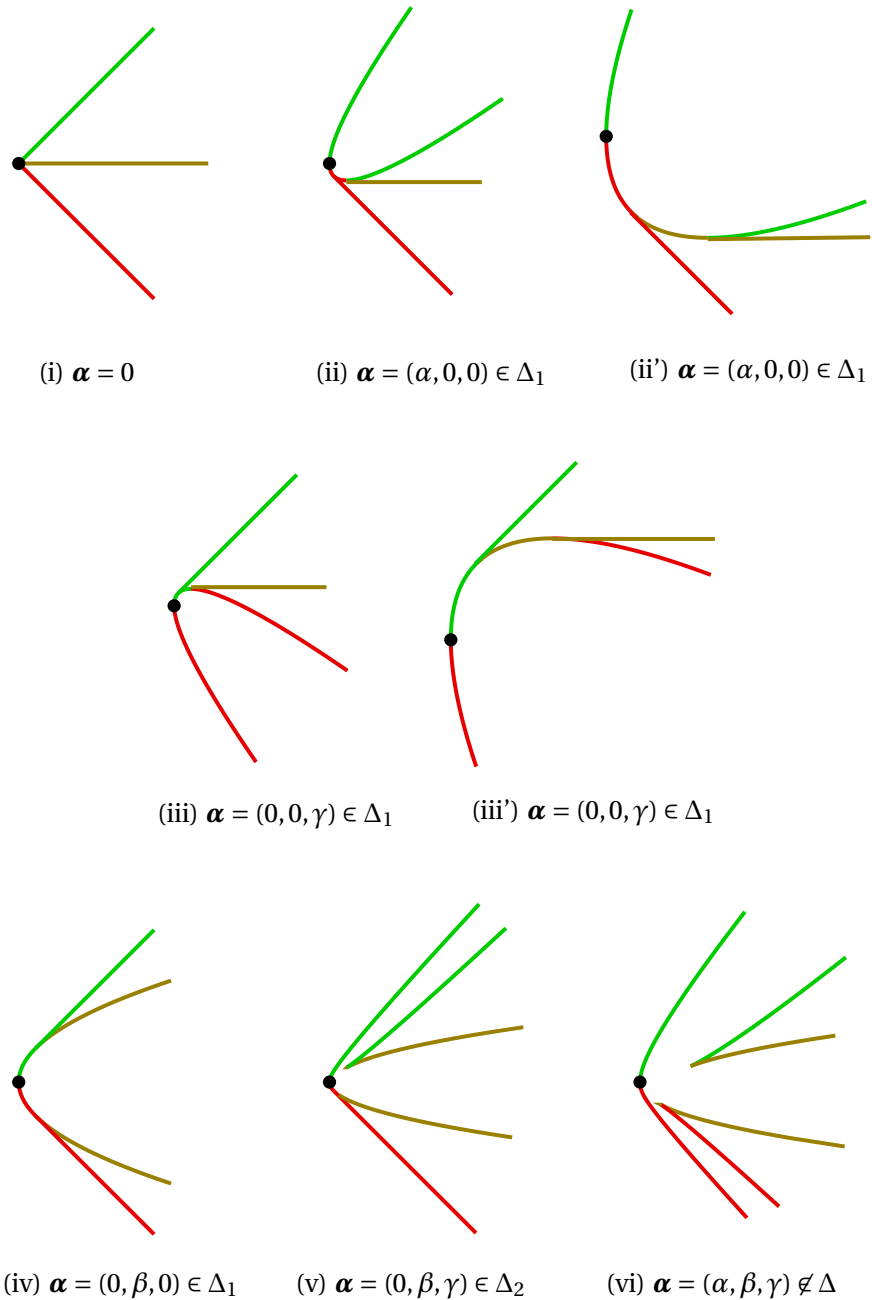
\begin{figure}[p]
\psset{linewidth=1.5pt,unit=0.9}
\centering
{\begin{pspicture}(-1,-4)(3.5,2)  
 \psline[linecolor=Elliptic](0,0)(2,2)
 \psline[linecolor=Unstable](0,0)(2.8,0)
 \psline[linecolor=Lyapunov](0,0)(2,-2)
 \rput(1,-3.1){(i) ${\bma}=0$}
 \psdot(0,0)
\end{pspicture}}
{\begin{pspicture}(-1,-4)(3.5,2) 
\psset{unit=.5}
 \parametricplot[linecolor=Elliptic]{-2.2}{0}{t 2 exp 2 div t t 2 exp 2 div sub -1 mul}
 \parametricplot[linecolor=Lyapunov]{0}{1}{t 2 exp 2 div t t 2 exp 2 div sub -1 mul}
 \parametricplot[linecolor=Elliptic]{1}{3.2}{t 2 exp 2 div t t 2 exp 2 div sub -1 mul}
 \rput(0.125,-.375){\psline[linecolor=Lyapunov](0,0)(3.5,-3.5)}
 \rput(0.5,-0.55){\psline[linecolor=Unstable](0,0)(4,0)}
 \psdot(0,0)
 \rput(3,-6.2){(ii) ${\bma}=(\alpha,0,0)\in\Delta_1$}
\end{pspicture}}
{\begin{pspicture}(-0.5,-3.5)(4,3) 
\psset{unit=3}
\rput(0,0.3){
 \parametricplot[linecolor=Elliptic]{-0.5}{0}{t 2 exp 2 div t t 2 exp 2 div sub -1 mul}
 \parametricplot[linecolor=Lyapunov]{0}{0.55}{t 2 exp 2 div t t 2 exp 2 div sub -1 mul}
 \parametricplot[linecolor=Unstable]{0.5}{1}{t 2 exp 2 div t t 2 exp 2 div sub -1 mul}
 \parametricplot[linecolor=Elliptic]{1}{1.6}{t 2 exp 2 div t t 2 exp 2 div sub -1 mul}
 \rput(0.12,-.375){\psline[linecolor=Lyapunov](0,0)(0.5,-0.5)}
 \rput(0.5,-0.507){\psline[linecolor=Unstable](0,0)(0.8,0.01)}
 \psdot(0,0)}
 \rput(0.7,-0.85){(ii') ${\bma}=(\alpha,0,0)\in\Delta_1$}
\end{pspicture}}

{\begin{pspicture}(-1,-4)(3.5,2) 
\psset{unit=.5}
 \parametricplot[linecolor=Lyapunov]{-2.2}{0}{t 2 exp 2 div t t 2 exp 2 div sub}
 \parametricplot[linecolor=Elliptic]{0}{1}{t 2 exp 2 div t t 2 exp 2 div sub}
 \parametricplot[linecolor=Lyapunov]{1}{3.2}{t 2 exp 2 div t t 2 exp 2 div sub}
 \rput(0.125,.375){\psline[linecolor=Elliptic](0,0)(3.5,3.5)}
 \rput(0.5,0.55){\psline[linecolor=Unstable](0,0)(4,0)}
 \psdot(0,0)
 \rput(3,-6.2){(iii) ${\bma}=(0,0,\gamma)\in\Delta_1$}
\end{pspicture}}
{\begin{pspicture}(-0.5,-3.5)(4,3) 
\psset{unit=3}
 \parametricplot[linecolor=Lyapunov]{-0.5}{0}{t 2 exp 2 div t t 2 exp 2 div sub}
 \parametricplot[linecolor=Elliptic]{0}{0.55}{t 2 exp 2 div t t 2 exp 2 div sub}
 \parametricplot[linecolor=Unstable]{0.5}{1}{t 2 exp 2 div t t 2 exp 2 div sub}
 \parametricplot[linecolor=Lyapunov]{1}{1.6}{t 2 exp 2 div t t 2 exp 2 div sub}
 \rput(0.12,.375){\psline[linecolor=Elliptic](0,0)(0.5,0.5)}
 \rput(0.5,0.507){\psline[linecolor=Unstable](0,-0.01)(0.8,-0.01)}
 \psdot(0,0)
 \rput(0.7,-0.85){(iii') ${\bma}=(0,0,\gamma)\in\Delta_1$}
\end{pspicture}}

{\begin{pspicture}(-1,-3.8)(3.5,3)  
\psset{unit=0.5}
 \parametricplot[linecolor=Unstable]{-3}{-1}{t 2 exp 2 div t}
 \parametricplot[linecolor=Lyapunov]{-1}{0}{t 2 exp 2 div t}
 \parametricplot[linecolor=Elliptic]{0}{1}{t 2 exp 2 div t}
 \parametricplot[linecolor=Unstable]{1}{3}{t 2 exp 2 div t}
 \rput(0.5,1){\psline[linecolor=Elliptic](0,0)(3.5,3.5)}
 \rput(0.5,-1){\psline[linecolor=Lyapunov](0,0)(3.5,-3.5)}
 \psdot(0,0)
 \rput(3,-6){(iv) ${\bma}=(0,\beta,0)\in\Delta_1$}
\end{pspicture}}
{\begin{pspicture}(-1,-3.8)(3.5,3) 
\psset{unit=3}
\def\alphaiszero{  2 t 2 exp mul 10 t mul 1 sub 2 exp div %
  t 2 exp 2 div add 
  t 2 exp 2 div %
  2 t mul 5 10 t mul 1 sub mul div add %
  t 10 div sub}
 \rput(0.02125, -0.04375){\psline[linecolor=Lyapunov](0,0)(0.7,-0.7)}
 \parametricplot[plotpoints=200,linecolor=Elliptic]{-1.2}{0}{\alphaiszero}%
 \parametricplot[plotpoints=200,linecolor=Lyapunov]{0}{0.05}{\alphaiszero}%
 \parametricplot[plotpoints=200,linecolor=Unstable]{0.06}{0.087}{\alphaiszero}%
 \parametricplot[plotpoints=100,linecolor=Unstable]{.117}{0.3}{\alphaiszero}%
 \parametricplot[plotpoints=100,linecolor=Elliptic]{0.25}{1.3}{\alphaiszero}%
 \psdot(0,0)
  \rput(0.4,-1){(v) ${\bma}=(0,\beta,\gamma)\in\Delta_2$}
\end{pspicture}}
{\begin{pspicture}(-1,-3.8)(3.5,3) 
\psset{unit=3}
\def\generalcase{
 4.5 t 2 exp mul 20 t mul 1 sub 2 exp div %
 2 t 2 exp mul 10 t mul 1 sub 2 exp div add %
 0.5 t 2 exp mul add 
 4.5 t 2 exp mul 20 t mul 1 sub 2 exp div %
 0.5 t 2 exp mul sub %
 0.9 t mul 20 t mul 1 sub div sub %
 0.4 t mul 10 t mul 1 sub div sub %
 0.1 t mul add}
 \parametricplot[plotpoints=100,linecolor=Lyapunov]{-1}{0}{\generalcase}
 \parametricplot[plotpoints=100,linecolor=Elliptic]{0}{0.0435}{\generalcase}
 \parametricplot[linecolor=Unstable]{0.1168}{0.3}{\generalcase}
 \parametricplot[linecolor=Lyapunov]{0.3}{1.1389}{\generalcase}
 \parametricplot[linecolor=Elliptic]{0.056}{0.07}{\generalcase}
 \parametricplot[linecolor=Unstable]{0.07}{0.087}{\generalcase}
 \psdot(0,0)
  \rput(0.4,-1){(vi) ${\bma}=(\alpha,\beta,\gamma)\not\in\Delta$}
\end{pspicture}}

\caption{Energy-momentum discriminants for the family $\kG$: the magnitude of the momentum increases to the right in each diagram.  The curves are the images of the sets of relative equilibria for the different values of $\bma$. (ii$^\prime$) is an expanded view of (ii), and similarly (iii') of (iii).  
\emph{Caveat}: see Remark \ref{rmk:double lines}\,(1). The colours refer to the stability of the relative equilibria, as in Figure\,\ref{fig:deformations}\,---\,see also \S\,\ref{sec:stability}.}
\label{fig:EM-discriminants}
\end{figure}


The map $(h_{\bma},\jj)$ we are considering is the reduced energy-momentum map, and its singular set is the set $\kR_{\bma}$ of relative equilibria.  Its \emph{discriminant} is the image $(h_{\bma},\jj)(\kR_{\bma})\subset\RR^2$, and the fibres of the reduced energy-momentum map are diffeomorphic within each connected component of the complement of this discriminant.  It is therefore useful to know what this discriminant looks like.

It is important to be aware that the $\kK_V$-equivalence we use to reduce a general family to $\kG$ by a change of coordinates, does not respect the discriminant; that is, two maps which are equivalent in our sense do not necessarily have diffeomorphic discriminants. Bearing that in mind, the discriminants of the universal family $\kG$ are shown in Figure\,\ref{fig:EM-discriminants}, and a brief description of how they might appear for other families is given in Remark\,\ref{rmk:double lines}\,(1) below.

For the universal family $\kG$, the discriminant of $(G_0,\,\jj)$ consists of 3 rays as in Figure\,\ref{fig:EM-discriminants}\,(i). On a perturbation along the singular set  $\Delta_1$ of the unfolding discriminant it may look like (ii), (iii) or (iv) (depending on the component); for a generic point of the unfolding discriminant the perturbation may be of the form in (v), while a generic deformation is shown in (vi).

\begin{remarks}\label{rmk:double lines}
(1) In general, if $H_0$ has higher order terms,  the straight lines in Figure\,\ref{fig:EM-discriminants} may not be straight. Moreover, in the diagrams these lines are doubly covered by the energy-momentum map $(h_0,\jj)$, in that each point on one of the lines corresponds to 2 distinct relative equilibria. If $H_0$ is not an even function they would be expected to `open up' and become cusps of some order.  For example, comparing Figures\,\ref{fig:deformations}\,(i) and \ref{fig:EM-discriminants}\,(i) (both with $\bma=0$) each line in the first maps 2--1 to the corresponding ray in the second, and for a Hamiltonian which is not even, the rays in the latter figure would become cusps.

\smallskip

\noindent(2) It would be natural to attempt a classification of the reduced energy-momentum maps via \emph{left-right equivalence} ($\mathcal{A}$-equivalence), which consists of equivalence via diffeomorphisms in source and target. The diffeomorphism in the source would then relate the singular sets of the two maps and the one in the target would relate the discriminants.  However, the map $(\jj,G_0)$ is of infinite codimension with respect to this equivalence, because the map from the singular set $\RE$ to the discriminant is not 1--1 (it is in fact 2--1 away from the origin as pointed out above).  If we were to add appropriate cubic terms to $H_0$ or $G_0$, the  $\mathcal{A}$-codimension of the map would become finite (though considerably higher than 3) but calculations would be harder, and would also not be valid in settings where the higher order terms are absent, as in the example of \S\ref{sec:rotors}. 

\noindent(3) There has been no mention of what happens to the equilibrium in the family.  Since an equilibrium corresponds to a critical point of the Hamiltonian, and for $\bma=0$ the Hamiltonian (or the reduced function $h$) has a non-degenerate critical point at the origin, under any perturbation this non-degenerate critical point must persist.  Though it will no longer be at the origin, it will necessarily lie on one of the branches of the set of relative equilibria.  However, the $\KV$-equivalence we use does not respect critical points of $h$ (only critical points of $h$ relative to $\jj$) so we cannot use the universal family $\kG$ to determine its location.  The (unique) equilibrium will lie at some general point of one of the curves, and not, as might first be thought,  at a bifurcation point. Calculations suggest that which curve it is on depends on the signs of $a,b,c$, and that in the most physical case where all are positive, it lies on the branch that contains the point with zero momentum, and moreover it lies on the Lyapunov stable side of the zero momentum point.  But this has not been proved in general.
\end{remarks}

\section{Stability of the relative equilibria}
\label{sec:stability}

There are well-developed methods for proving the Lyapunov stability of relative equilibria,  based on Dirichlet's criterion for the stability of an equilibrium.  Since we are assuming the original action of $\SO(3)$ is free, the reduced spaces are smooth manifolds---indeed, we have seen they are of the form $\kO_\mu\times\kS$---and the Dirichlet criterion for \emph{reduced Lyapunov stability} (that is, Lyapunov stability on the reduced space) is that the Hessian of the reduced Hamiltonian $h_\mu$ should be positive or negative definite at the (relative) equilibrium in question.  Under this hypothesis, the relative equilibrium in the full phase space $\kP$ is $G$-Lyapunov stable \cite{Marsden92}, and it is shown by Lerman and Singer \cite{LS99}, based on work of Patrick \cite{Pa92}, that moreover it is Lyapunov stable relative to the possibly smaller group $G_\mu$, where $\mu$ is its momentum value, see also \cite{OrtegaRatiu}.  In \cite{Mo97,MT03} it is shown that if the Hessian is definite (in which case the relative equilibrium is \emph{extremal}) then on each nearby reduced space there is also an extremal (hence stable) relative equilibrium. 

If the momentum $\mu$ is non-zero, then the coadjoint orbits form a smooth foliation near $\mu$, and hence so do the reduced spaces near $(\mu,s)$.  Furthermore, if the reduced Hamiltonian has a non-degenerate critical point at $(\mu,s)$ then it does so for nearby reduced spaces as well (as observed essentially by Arnold \cite[Appendix 2]{MMCM}, at least in the case $\kS=0$), and if one Hessian is definite so are all nearby ones. 

On the other hand, if $p$ is a non-degenerate relative equilibrium with momentum $\mu=0$, then the local structure depends on the value of the velocity $\xi$.  If $\xi\neq0$ the situation was first studied by Patrick \cite{Pa95}, also by the author \cite{Mo97} and again in more detail by Patrick \cite{Pa99}, where he considers the eigenvalues of the linear approximations to the flow at such relative equilibria. We begin by considering the stability of nearby relative equilibria in this case, and afterwards we consider the case where both $\mu$ and $\xi$ are zero.

Returning to the decomposition (\ref{eq:quotient}), and the Hamiltonian $\Hbar(\mu,s)$, the Poisson Hamiltonian system on $\kP/G\simeq \so(3)^*\times\kS$ is,
\begin{equation}\label{eq:reduced equations}
\left\{\begin{array}{rcl}
  \dot\mu &=& -\coad_{D_\mu\Hbar}\mu,\\
  \dot s &=& J D_s\Hbar.
\end{array}\right.
\end{equation}
Here $J$ is the usual symplectic/Poisson structure matrix on $\kS$.  Linearizing these equations at the origin gives
$$\pmatrix{\dot\mu\cr \dot s} = L\pmatrix{\mu\cr s},$$
with
$$L = \pmatrix{-\coad_\xi& 0\cr C & J D^2_s\Hbar}$$
and where $\xi=D_\mu \Hbar(0,0)\in\so(3)$, and $C=J D^2_{s\mu}\Hbar(0,0)$ which is a linear map $\so(3)\to\kS$.  The spectrum of $-\coad_\xi$ is equal to $\bigl\{0,\pm\ii|\xi|\bigr\}$. More details can be found in \cite{Pa99,PR00}.

Write $L_0 = J D_s^2\Hbar(0,0)$. If the spectra of $L_0$ and of $-\coad_\xi$ are disjoint, then a change of coordinates (or choice of symplectic slice) can be chosen to eliminate the matrix $C$. If on the other hand, the spectra are not disjoint, one says there is a rotation-vibration resonance, and the matrix $C$ contributes a nilpotent term to the linear system.  See Remark \ref{rmk:stability}\,(c) below for further comments. 

Recall \cite{MHO} that an infinitesimally symplectic matrix is said to be,
\begin{itemize}
\item \emph{spectrally stable} if its spectrum is pure imaginary, 
\item \emph{linearly stable} or \emph{elliptic} if it is spectrally stable with zero niloptent part, 
\item \emph{strongly linearly stable} if it lies in the interior of the set of linearly stable matrices,
\item  \emph{linearly unstable} if it has an eigenvalue with non-zero real part. 
\end{itemize}
An equilibrium is said to be spectrally stable, elliptic, strongly linearly stable or linearly unstable if the linear part of the Hamiltonian vector field has the corresponding property. Note that if an equilibrium point is linearly unstable then it is also nonlinearly unstable.  In any continuous family of (relative) equilibria, no matter how it is parametrized, the transitions from one stability type to another occur only at points where the spectrum has a double eigenvalue. This could be at zero, in the transition between spectrally stable and unstable, and where the Hessian matrix of the Hamiltonian becomes degenerate, or a double imaginary eigenvalue (with mixed sign) resulting usually in a Hamiltonian-Hopf bifurcation and a change again from spectrally stable to unstable. The difference between linear stability and Lyapunov stability (in the full nonlinear system) lies with the `Krein sign' of the linear vector field, which is a question of whether the quadratic part of the Hamiltonian is positive definite or not---this is Dirichlet 's criterion for (Lyapunov) stability.  If the Hamiltonian is definite (positive or negative) then the equilibrium is strongly stable, and if a pair of complex conjugate eigenvalues crosses the origin but remains on the imaginary axis, then there is a transition from Lyapunov stable to elliptic. See \cite{BLM05} for more details.

\subsection{Zero momentum, non-zero velocity}

We now present the first stability theorem appropriate for relative equilibria with zero momentum but non-zero velocity.

\begin{theorem}\label{thm:stability xi neq 0}
Suppose $G=\SO(3)$ acts freely on $\kP$ as before, with equivariant momentum map $\JJ$. Suppose that $p_0\in \kP_0$ (zero momentum) is a non-degenerate relative equilibrium  with non-zero angular velocity $\xi\in\so(3)$. Then,

\begin{enumerate}
\item there is a neighbourhood of $p_0$ in $\kP/G$ such that the relative equilibria in the neighbourhood form a smooth curve through $p_0$ intersecting each nearby reduced space in precisely 2 points;
\item if the Hessian of the reduced Hamiltonian  $\d^2\Hbar_0(p_0)$ is definite then on one side of $p_0$ on the curve the relative equilibrium will be Lyapunov stable, and if there is no rotation-vibration resonance then on the other it will be elliptic. 
\item  if $p_0$ is strongly linearly stable and there is no rotation-vibration resonance then throughout a neighbourhood of $p_0$ on the curve the equilibrium will be elliptic;
\item if $p_0$ is linearly unstable, then throughout the curve (in a neighbourhood of $p_0$), the relative equilibria will all be linearly unstable.
\end{enumerate}
\end{theorem}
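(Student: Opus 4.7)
The plan is first to establish the smooth curve of relative equilibria of part~(1) via the implicit function theorem, and then to analyse stability in parts~(2)--(4) by comparing, via continuity of the spectrum, the linearization at a nearby relative equilibrium on $\kP_\mu\simeq\kO_\mu\times\kS$ with the limiting matrix $L$ displayed just before the theorem.

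For~(1), Proposition~\ref{prop:reduced re} identifies the relative equilibria with $\mu\neq 0$ as those $\mu\in\so(3)^*\simeq\RR^3$ for which $\d h(\mu)$ and $\mu$ are linearly dependent, a system with two independent component equations. Since $\d h(0)=\xi\neq 0$, the implicit function theorem solves for the $\xi^\perp$-component of $\mu$ as a smooth function of a scalar parameter $t$ along $\RR\xi$; the resulting curve $\mu(t)$ satisfies $\mu(0)=0$ and $\mu'(0)\parallel\xi$, and therefore meets every nearby coadjoint orbit $\|\mu\|=r$ transversely in exactly two points, one on each side of the origin.

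For~(2), write $\lambda(t)$ for the Lagrange multiplier defined by $\d h(\mu(t))=\lambda(t)\mu(t)$; since $\d h(\mu(t))\to\xi$ and $\mu(t)\approx t\xi$, we have $\lambda(t)\sim 1/t$ as $t\to 0$. A short computation---either a Schur reduction of the full $(\mu,s)$ Hessian of $H$ using $\d_s H\equiv 0$ along the section $s=s(\mu)$, or direct use of the intrinsic-Hessian formula on the sphere---shows that the Hessian of the reduced Hamiltonian $H_\mu$ at the critical point is, up to lower-order corrections, block-diagonal of the form $\bigl(-\lambda(t)\,I\bigr)\big|_{T\kO_{\mu(t)}}\oplus\d_s^2 H(p_0)$. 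Hence for small $|t|$ the $\kO_\mu$-block is negative definite when $t>0$ and positive definite when $t<0$.  If $\d^2 H_0(p_0)=\d_s^2 H(p_0)$ is positive definite (the negative case being symmetric), the full Hessian is therefore positive definite on the branch $t<0$, giving Lyapunov stability by Dirichlet's criterion, and indefinite on the branch $t>0$.

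For the remaining part of~(2) and for~(3) and~(4), I would use continuity of the spectrum.  The linearization of the reduced Poisson system~(\ref{eq:reduced equations}) on $\so(3)^*\times\kS$ at $(\mu(t),s(\mu(t)))$ varies continuously with $t$ and at $t=0$ coincides with the block matrix $L$, whose spectrum is $\{0,\pm\ii\|\xi\|\}\cup\mathrm{spec}(J\,\d_s^2 H(p_0))$ and is semisimple since $-\coad_\xi$ is semisimple.  Restricting to $\kP_{\mu(t)}\simeq\kO_{\mu(t)}\times\kS$ for $t\neq 0$ removes the zero eigenvalue (which corresponds to the radial direction, normal to $\kO_\mu$) and leaves a spectrum that is a small perturbation of $\{\pm\ii\|\xi\|\}\cup\mathrm{spec}(J\,\d_s^2 H(p_0))$.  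Under the no-resonance hypothesis these two groups remain disjoint as $t$ varies, so the matrix stays semisimple; consequently each of ``pure imaginary spectrum'', ``strong linear stability'', and ``some eigenvalue off the imaginary axis'' is an open condition that propagates from $t=0$ to the whole curve, yielding the elliptic claim in~(2) (when $\d_s^2 H(p_0)$ is definite) and the full statements~(3) and~(4).  The principal technical obstacle is making these continuity arguments uniform as $t\to 0$ despite the blow-up $\lambda(t)\sim 1/t$ and the simultaneous shrinkage of the Kostant--Kirillov form on $\kO_{\mu(t)}$; I would overcome this by rescaling the $\kO_\mu$-coordinates by $1/\|\mu(t)\|$ (working in a tangent chart at $\mu(t)$), so that both the symplectic form and the linearized vector field extend smoothly through $t=0$, with limit equal to the restriction of $-\coad_\xi$ to $\xi^\perp$.
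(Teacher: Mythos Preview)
Your proof is correct and follows essentially the same strategy as the paper: for~(1), producing the curve of relative equilibria (you via the implicit function theorem on the parallelism condition $\d h(\mu)\parallel\mu$, the paper via the observation that $h$ restricted to small spheres has exactly two critical points), and for~(2)--(4), invoking continuity of the spectrum from the limiting linearization $L$ at $\mu=0$.

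Two small differences are worth noting. First, your treatment of the Lyapunov-stable side in part~(2) is more computational---you identify the $T\kO_\mu$-block of the reduced Hessian as $-\lambda(t)I+O(1)$ with $\lambda(t)\sim 1/t$---whereas the paper simply observes that $h_\mu$ on the sphere $\kO_\mu$ has one minimum and one maximum, and that adjoining the positive-definite $\d_s^2H(p_0)$ on $\kS$ makes the full Hessian definite at the minimum. Second, the ``technical obstacle'' you flag (blow-up of $\lambda$ versus shrinkage of the Kostant--Kirillov form) is genuine if one works intrinsically on $\kP_\mu$, but the paper sidesteps it by linearizing the ambient Lie--Poisson system~(\ref{eq:reduced equations}) on $\so(3)^*\times\kS$: that linearization is the continuous matrix $L$, and restricting to the symplectic leaf for $\mu\neq 0$ simply discards the zero eigenvalue, so no rescaling is needed. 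Finally, note that part~(4) does not require the no-resonance hypothesis at all, since ``some eigenvalue has non-zero real part'' is already an open condition.
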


\begin{remarks} \label{rmk:stability}
(a) The transition between Lyapunov stable and elliptic relative equilibria described in part (2) can be seen in Figure\,\ref{fig:deformations}\,(ii)--(vi), where the black dot represents the point $\mu=0$.\\
(b) In the case that $\kP_0$ is just a point (so $\kS=0$), the relative equilibria will be Lyapunov stable throughout the curve; see the example of the rigid body with rotors described in \S\ref{sec:rotors}.\\
(c) The rotation-vibration resonance was introduced in \cite{Pa99}.  If there is a rotation-vibration resonance and $C\neq0$ then it might be expected to see a singular Hamiltonian-Hopf bifurcation along the curve of relative equilibria (singular because it occurs at $\mu=0$, where the dimension of the reduced space changes). This possibility was also suggested at the very end of \cite{Mo97}, and to the author's knowledge has not yet been investigated as a bifurcation. An example of this phenomenon can be found in systems of point vortices---see \cite[\S 9]{LMR11}.
\end{remarks}

\begin{proof}
(1). Except for the intersection with reduced spaces, this is proved in \cite{Pa95}. Here we summarize the argument using the constructions described here in Section\,\ref{sec:reduction}. 
We define the function $h:\so(3)^*\to\RR$ as in (\ref{eq:defn of h}), and we have $\d h(0)=\xi\neq0$. Now $h_\mu$ is the restriction of $h$ to the sphere through $\mu$ and it follows that for small $\mu$ there are precisely two critical points of $h_\mu$, and as $\mu$ varies these form a smooth curve through $\mu=0$.  

\noindent(2) and (3). For each small, non-zero value of $\mu$, the function $h_\mu$ has an isolated minimum and an isolated maximum on the sphere $\kO_\mu$, and no other critical points. Suppose for (2) that $D_s^2\Hbar(p_0)$ is \emph{positive} definite (the argument for negative definite being similar), then the point which is a local minimum of $h_\mu$ will be Lyapunov stable, because at that point the reduced Hamiltonian on $P_\mu$ will also have a non-degenerate local minimum.

On the other hand, for the local maximum $\nu$ of $h_\mu$, and for both critical points in case (3), the function $H_\mu$ will have a critical point at $p=(\nu,s(\nu))$ for which the Hessian is indefinite. However, the spectrum of $L_\mu=L\restr{T_pP_\mu}$ will be a perturbation of the union of the spectra of $L_0$ and $\coad_\xi$ excluding the zero, and by hypothesis these are purely imaginary and disjoint. Moreover, in (2) the assumption that $\d^2H_0(p)$ is positive definite implies that $L_0$ is strongly linearly stable.  Thus in both (2) and (3) any sufficiently small perturbation of it has purely imaginary eigenvalues. It follows that the spectrum of $L_\mu$ will also be pure imaginary for $\mu$ sufficiently small.

\noindent (4). This follows a similar argument.  The spectrum of $L_\mu$ will contain perturbations of the spectrum of $L_0$, and as the latter has a non-zero real part, so will the former.
\end{proof}

\subsection{Zero momentum, zero velocity}

We now turn to the unfolding of the relative equilibrium $p$ with $\mu=0$ and $\xi=0$, as in Theorem \ref{thm:versal}. Now, the equivalence relation used for the theorem only respects the set of relative equilibria; it does not respect dynamics, nor even the level sets of the energy-momentum map so one cannot \emph{a priori} deduce the stability of the bifurcating relative equilibria from studying the normal form $\kG$. 
On the other hand, stability only changes (along a branch of relative equilibria) through one of the two scenarios as described above.

Recall that the unfolding discriminant $\Delta$ of $\kG$ consists of three planes in $\RR^3$, that $\Delta_2$ denotes the open strata (regular points of $\Delta$) and $\Delta_1$ the points where two planes intersect; that is, $\Delta$ is the disjoint union of $\Delta_2$, $\Delta_1$ and the origin (see Figure\,\ref{fig:discriminant}). It follows from Theorem \ref{thm:versal} that the unfolding discriminant of any other family based on $H_0$ is pulled back from this $\Delta$ by some smooth map $\Phi$.  The following theorem justifies the pictures and stabilities shown in Fig.\,\ref{fig:deformations}.

\begin{theorem}\label{thm:stability xi=0}\parindent=0pt
Let $\kH$ be a family of\/ $\SO(3)$-invariant Hamiltonians, with parameter $u\in U$, and with $H_0$ having a non-degenerate relative equilibrium at $p_0\in\kP_0$ with $D_s^2H_0(p_0)$ positive definite, and with $D_\nu H_0(p_0) = 0$ (so the velocity is zero). Assume moreover that $D^2_\nu H_0(p_0)$ is a quadratic form whose three eigenvalues (with respect to an $\SO(3)$-invariant inner product on $\so(3)^*$) are distinct. (This is the setting of Theorem\,\ref{thm:versal}.)  Let $\varphi:U\to\RR^3$ be the map given by Theorem\,\ref{thm:versal}, inducing $\kH$ from $\kG$.

\begin{enumerate}
\item There is a neighbourhood of $p_0$ in $\kP/G$ in which the set of relative equilibria for $H_0$ consists of three curves; along one of these the relative equilibria are Lyapunov stable, along another they are elliptic and along the third they are linearly unstable.

\item There is a neighbourhood $M$ of the origin in $\so(3)^*$ such that, for 
$u\in U$ with $\varphi(u)\neq0$,

(i) if $\mu\in M$ is sufficiently small there are two relative equilibria as in Theorem \ref{thm:stability xi neq 0}, of which one is Lyapunov stable and the other elliptic;

For parts (ii)--(iv) we assume the Hamiltonian is analytic:

(ii) for $\varphi(u)\not\in\Delta$, as $\|\mu\|$ is increased, there are two saddle-centre bifurcations, one producing a Lyapunov stable relative equilibrium (\re) and an unstable \re, while the other produces an elliptic \re\ and an unstable \re;

(iii) for $\varphi(u)\in\Delta_2$ and $\mu\in M$, as $\|\mu\|$ is increased further, again one of the \re\ persists, and the other undergoes a supercritical\footnote{after a supercritical pitchfork bifurcation, a stable \re\ becomes two stable \re s and one unstable one, see Fig.\,\ref{fig:bifurcations}} pitchfork bifurcation; there is also a saddle-centre bifurcation. Whether it is the Lyapunov stable or elliptic \re\ that bifurcates depends on which connected component of\/ $\Delta_2$ contains $\varphi(u)$\,;

(iv) for $\varphi(u)\in\Delta_1$, as $\|\mu\|$ is increased, one \re\ persists, while the other undergoes two successive supercritical pitchfork bifurcations. Which persists and which bifurcates will depend on which component of\/ $\Delta_1$ contains $\varphi(u)$. 
\end{enumerate}
\end{theorem}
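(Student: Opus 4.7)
Part~(1) I would verify by direct linearisation at the three branches of $\kR(H_0)$. After the coordinate rotation making $h_0(x,y,z)=ax^2+by^2+cz^2+O(3)$ with $a>b>c$, the branches are smooth perturbations of the three coordinate axes. Restricting $h_0$ to the sphere $\kO_\mu$ near each axis gives, respectively, a non-degenerate minimum on the $z$-axis (coefficients $a-c,\,b-c>0$), a saddle on the $y$-axis (coefficients $a-b>0,\,c-b<0$), and a non-degenerate maximum on the $x$-axis. Since $D_s^2H_0$ is positive definite, the Hessian of the reduced Hamiltonian on $\kO_\mu\times\kS$ decomposes as the direct sum of $D^2(h_0|_{\kO_\mu})$ and $D_s^2H_0$. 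On the $z$-branch it is positive definite, so Dirichlet's criterion yields Lyapunov stability; on the $x$-branch the $\kO_\mu$ block contributes an elliptic pair (non-degenerate extremum of a function on a symplectic sphere) and the $\kS$ block contributes purely imaginary eigenvalues of $JD_s^2H_0$, so the linearisation is elliptic but not Lyapunov stable; on the $y$-branch the $\kO_\mu$ block is hyperbolic, giving linear instability. No bifurcations occur along these branches (for $H_0$ itself), so each stability type persists throughout.

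For Part~(2)(i), the hypothesis $\varphi(u)\neq 0$ forces $\d h_u(0)\neq 0$, so the relative equilibrium inherited from $p_0$ has zero momentum and non-zero velocity. The condition $D_s^2H_u\succeq 0$ persists by continuity for small $u$, and the distinctness of the eigenvalues of $D_\nu^2H_0$ rules out rotation--vibration resonance after possibly shrinking $U$. Hence Theorem~\ref{thm:stability xi neq 0}\,(2) applies to the smooth curve of relative equilibria through $\mu=0$, producing one Lyapunov-stable and one elliptic branch on either side of zero momentum.

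For Parts~(2)(ii)--(iv), I would combine Theorem~\ref{thm:versal}, which fixes the combinatorics of $\kR(H_u)$ via the diffeomorphism $\Phi_u$ onto $\kR(G_{\varphi(u)})$, with propagation of stability along smooth branches. The number and type of bifurcations follow case-by-case from Section~\ref{sec:study-univ-family}: saddle-centre bifurcations occur at points of closest approach of a branch to the origin, and pitchforks occur at the transverse crossings of two branches, whose locations in the universal family are given by (\ref{eq:pitchfork points}) and (\ref{eq:pitchfork points2}). Under analyticity the set of $\mu$ at which $D^2H_\mu$ degenerates is a proper analytic subvariety, so the bifurcations occur at isolated values of $\|\mu\|$ and stability is constant between them. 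I would then fix stability on each branch at a representative point: the branch through $\mu=0$ is handled by Part~(2)(i), while the branches that avoid the origin inherit their stability from the corresponding branch of $\kR(H_0)$ treated in Part~(1), by smooth dependence of $H_u$ on $u$.

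The main obstacle is that $\kK_V$-equivalence, underlying Theorem~\ref{thm:versal}, preserves only $F^{-1}(V)$ and not the dynamics, so the colours of Figure~\ref{fig:deformations} cannot be read off directly from $\kG$; the bifurcation types must be analysed intrinsically. The saddle-centre case is generic of codimension one in $\|\mu\|$, and the standard one-parameter Hamiltonian normal form guarantees that it produces a Lyapunov-stable/elliptic relative equilibrium together with a linearly unstable one, matching the propagated colours. For the pitchforks I would verify supercriticality explicitly for $\kG$ by expanding the minors of $F_{\bma}$ at the crossing points (\ref{eq:pitchfork points})--(\ref{eq:pitchfork points2}) and checking that the two new branches emerge on the outer side of the critical radius $\|\mu\|=r_\ast$; since the transversality of the crossing and its position relative to concentric spheres are generic geometric properties preserved under the diffeomorphism $\Phi_u$ (up to a smooth reparametrisation of the radial variable), the pitchfork in $\kR(H_u)$ is likewise supercritical. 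Assembling these three ingredients---Part~(1), Part~(2)(i), and bifurcation-by-bifurcation propagation---then reproduces the stability pattern of Figure~\ref{fig:deformations}.
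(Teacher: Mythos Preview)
Your outline for Part~(1) and Part~(2)(i) is essentially sound and matches the paper (which simply cites \cite{MR99} for (1) and invokes Theorem~\ref{thm:stability xi neq 0} for (2)(i)). One small slip: the absence of rotation--vibration resonance for small $u$ is not a consequence of the distinctness of the eigenvalues of $D^2_\nu H_0$; it follows because $\xi\to 0$ as $u\to 0$, so the spectrum of $\coad_\xi$ collapses to $\{0\}$ while the spectrum of $JD^2_sH_0$ stays bounded away from zero by positive-definiteness.

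The real gap is in (2)(ii)--(iv). You are right that $\kK_V$-equivalence only controls $F^{-1}(V)$, but you then tacitly use more: your claim that ``the transversality of the crossing and its position relative to concentric spheres are \dots\ preserved under the diffeomorphism $\Phi_u$'' is exactly what $\kK_V$-equivalence does \emph{not} give. The diffeomorphism $\Phi_u$ has no reason to respect $\jj$ or the foliation by spheres, so neither the number and location of saddle-centre points (which are critical points of $\jj$ restricted to $\kR(H_u)$) nor the sub/supercriticality of the pitchforks (which depends on whether the bifurcating branches lie inside or outside the critical sphere) can be read off from $\kG$ via $\Phi_u$. Your analyticity argument shows the bifurcations are isolated, but gives no upper bound on how many there are; a priori a branch of $\kR(H_u)$ could wiggle and be tangent to many momentum spheres.

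The paper closes this gap with a separate ingredient you are missing: a multiplicity bound (Lemma~\ref{lemma:multiplicity}) showing that $\jj$ restricted to $\kR_\alpha$ has at most four critical points near the origin, proved via the Bruce--Roberts machinery of logarithmic characteristic varieties and upper semicontinuity of intersection multiplicity under deformation. Combined with a parity count (each branch carries an odd number of critical points of $\jj$, and one of them is the origin), this forces exactly one critical point per branch, hence exactly two saddle-centres in case (ii). The same multiplicity bound is then used in a homotopy from $\kG$ to $\kH$: no extra critical points can be created along the way, so the Morse indices---and with them the stability types and the supercriticality of the pitchforks---are transported from the explicitly computed model $\kG$ to $\kH$. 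Without an argument of this kind, your propagation-of-stability step has nothing to anchor to on the two branches that miss the origin.
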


In Figures\,\ref{fig:deformations} and\,\ref{fig:EM-discriminants}, (i) corresponds to part (1) of the theorem, (ii)--(iv) to part 2(iv), (v) to part 2(iii) and (vi) to part 2(ii).  Note that the theorem does not imply that all these cases arise for every family: it is certainly possible that the image of $\varphi$ is contained in $\Delta_1$, for example.

\begin{proof}
(1) The existence of the three curves is part of the calculations in \S\,\ref{sec:study-univ-family}. 
And the stability part is proved in \cite[Section 2.5]{MR99}.

\noindent(2) 
(i) For $u\neq0$ we have $D_\nu H_u(0)\neq0$, consequently the relative equilibrium has non-zero velocity ($\xi\neq0$), and since $\xi$ is continuous function of $u$, it follows that for $u$ sufficiently small there is no rotation-vibration resonance, so this follows from Theorem \ref{thm:stability xi neq 0}.

\noindent(ii)
For each perturbation $H_u$, with $u\not\in\Delta$ the set $\kR$ consists of three disjoint curves, as described in \S\,\ref{sec:study-univ-family}.  By Lemma\,\ref{lemma:multiplicity} below, the restriction of the function $\jj$ to the curve $R_\alpha$ has at most 4 critical points in a fixed neighbourhood of the origin (for sufficiently small values of $\alpha$).  Now in this fixed neighbourhood of the origin, $\jj$ is increasing and on each curve reaches its maximum at the ends (for sufficiently small values of $\alpha$). The function therefore has at least one minimum on each branch, and in general an odd number of critical points (counting multiplicity) on each branch. The only way that is compatible with the upper bound of 4 is that there is a single non-degenerate critical point of $\jj$ on each branch, and so 3 in all.  Now one of the branches passes through the origin, where $\jj$ reaches its minimum value of $0$, while on the other two branches, the minimum will be a point where the branch is tangent to the momentum sphere (coadjoint orbit), so producing a saddle-centre bifurcation point.  These will be the two points of bifurcation mentioned in the theorem. 

There remains to show that the saddle-centre bifurcations involve the creation of critical points with the stated stability properties.  This is true for the model family $\kG$ by direct calculation.  Now consider a 1-parameter family of systems perturbing $\kG$ to $\kH$. By the multiplicity argument above, no other critical points are introduced, so the index of each critical point is the same for $\kG$ as the corresponding one for $\kH$, and the stability type depends only on the index.

\noindent(iii), (iv).  Here the proof is analogous to part (ii), but more straightforward as the pitchfork bifurcations will correspond to points where the map $\d(h,\jj)$ is not transverse to $V$, a property preserved by the equivalence we use in the proof above of Theorem\,\ref{thm:versal}, so is clearly preserved by the diffeomorphism.  If the pitchfork bifurcations were transcritical rather than sub- or super-critical then this would involve extra critical points which we know from the lemma below cannot happen.  The type of pitchfork and stability properties is the same in the family $\kH$ as for $\kG$ by the homotopy argument given above.

That there are no other bifurcations or loss of stability, except those involving an eigenvalue becoming zero, follows because there is no rotation-vibration resonance, so the spectra from the rotation part and the shape part are disjoint, and so can be continued with no extra multiple eigenvalues occurring. 
\end{proof}

\begin{remarks}
(i) If the hypothesis that $D^2_sH_0$ is positive definite is replaced by that of $L_0=JD^2_sH_0$ being strongly linearly stable, then the conclusions of the theorem are the same, but with Lyapunov stable replaced by elliptic throughout.  On the other hand, if $L_0$ is linearly unstable, then all the existence and bifurcation statements are the same except that all the \re\ are likewise linearly unstable.\\
(ii) In the proof of 2(ii) we need to assume the Hamiltonian is analytic in order to use methods of commutative algebra to estimate the number of critical points (see Lemma\,\ref{lemma:multiplicity} below); it would be surprising if this were an essential hypothesis. 
\end{remarks}

In the proof above we used the following lemma, which we prove using some commutative algebra based on ideas of Bruce and Roberts \cite{BR88}.

\begin{lemma} \label{lemma:multiplicity}
There is a neighbourhood $U_1$ of the origin in $\so(3)^*$ and a neighbourhood $U_2$ of\/ $0$ in $\RR^3$ such that for all $\alpha\in U_2$ the restriction of the function $\jj$ to $\kR_\alpha$ has at most 4 critical points in $U_1$, counting multiplicity.
\end{lemma}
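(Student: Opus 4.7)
The plan is to compute the scheme-theoretic multiplicity of the critical locus of $\jj|_{\kR_{\bma}}$ at the origin for $\bma=0$ using the Bruce--Roberts formalism \cite{BR88}, and then transfer the bound to nearby $\bma$ by upper semi-continuity of fibre length.

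First I would identify the critical scheme. At $\bma = 0$ the variety $V_0 := \kR_0$ is the union of the three coordinate axes in $\RR^3$, defined by $I_{V_0} = (xy,\,yz,\,zx)$ in $R := \CC\{x,y,z\}$. A coefficient-by-coefficient analysis of the three congruences $\theta(xy),\,\theta(yz),\,\theta(zx) \in I_{V_0}$ shows that a vector field $\theta = p\,\partial_x + q\,\partial_y + r\,\partial_z$ preserves $I_{V_0}$ if and only if $p \in (x,\,yz)$, $q \in (y,\,zx)$ and $r \in (z,\,xy)$. Consequently the $R$-module $\theta_{V_0}$ of logarithmic vector fields is generated by $x\partial_x,\;y\partial_y,\;z\partial_z,\;yz\,\partial_x,\;zx\,\partial_y,\;xy\,\partial_z$, and applying these six generators to $\jj = \half(x^2+y^2+z^2)$ gives $\theta_{V_0}(\jj) = (x^2,\,y^2,\,z^2,\,xyz)$. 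The Bruce--Roberts critical ideal is therefore
$$J_0 := I_{V_0} + \theta_{V_0}(\jj) = (x^2,\,y^2,\,z^2,\,xy,\,yz,\,zx),$$
whose quotient $R/J_0$ has monomial basis $\{1,\,x,\,y,\,z\}$, so $\dim_\CC R/J_0 = 4$.

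Next I would propagate the bound. The ideals $J_{\bma}$ assemble into an analytic family over $\bma \in \RR^3$; I would choose a closed ball $\overline{U_1}$ around the origin small enough that the origin is the only point of the critical scheme $V(J_0)$ in it, and then by continuity pick $U_2 \subset \RR^3$ small enough that no point of $V(J_{\bma})$ reaches $\partial U_1$ for $\bma \in U_2$. The upper semi-continuity of fibre length for finite-length analytic families then yields
$$\sum_{\mu \in U_1} \dim_\CC (R/J_{\bma})_\mu \;\leq\; \dim_\CC (R/J_0)_0 \;=\; 4,$$
which is the desired bound.

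The main technicality will be in justifying that the Bruce--Roberts scheme $V(J_{\bma})$ genuinely counts the geometric critical points of $\jj|_{\kR_{\bma}}$ with their correct multiplicities, given that $\theta_{V_{\bma}}$ itself can vary discontinuously in $\bma$. On the smooth locus of $\kR_{\bma}$, $\theta_{V_{\bma}}$ is simply the module of tangent vector fields and $J_{\bma}$ reduces to the standard Jacobian ideal of $\jj$ on a smooth curve, so the two notions coincide there. At the transverse-crossing singularities arising for $\bma \in \Delta$ a short branch-by-branch check confirms the correct contribution. Once this is in place, the module calculation for $\theta_{V_0}$, the dimension count for $R/J_0$, and the appeal to upper semi-continuity are all routine.
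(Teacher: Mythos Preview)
Your proposal is correct and follows essentially the same approach as the paper: compute the Bruce--Roberts multiplicity of $\jj$ on $\kR_0$ at the origin to be $4$, then invoke upper semi-continuity under deformation. The only cosmetic difference is that the paper carries out the computation upstairs in $T^*\CC^3$, intersecting the graph of $\d\jj$ with the logarithmic characteristic variety $LC^-(\kR_0)$ (with ideal $(xy,yz,zx,xk,y\ell,zm)$), whereas you work directly on the base with the ideal $J_0=I_{V_0}+\theta_{V_0}(\jj)$; substituting $k=x,\ \ell=y,\ m=z$ into the paper's ideal yields exactly your $J_0=\maxid^2$, so the two calculations coincide. The paper is equally informal about the semi-continuity step when the variety itself (and not merely the function) is deformed, so your acknowledged technicality is no worse than what the paper leaves open.
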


\begin{proof}
Let $f$ be a smooth function on a manifold $M$.  The restriction of $f$ to a submanifold $X$ has a critical point at $x\in X$ if the graph of the differential 1-form $\d f$ intersects the conormal variety $N^*X$ at a point over $x$ (this is all in the cotangent bundle $T^*M$). The conormal variety is the bundle over $X$, given by
$$\left\{(x,\lambda)\in T^*M \mid x\in X,\; \lambda\in (T_xX)^\circ\right\}.$$
Here $(T_xX)^\circ$ is the annihilator of the tangent space $T_xX$. The total space of this bundle has the same dimension as the ambient manifold $M$.  The multiplicity of the critical point is equal to the intersection number of the graph and the conormal bundle. This multiplicity can be defined using modules of vector fields tangent to $X$, or using the sum of the ideals defining the graph of $\d f$ and the conormal bundle of $X$.

When $X$ is singular, the conormal bundle is replaced by the so-called logarithmic characteristic variety $LC^-(X)$ which is essentially the union over the (logarithmic) strata of $X$ of the closure of the conormal bundle to each stratum, see \cite{BR88} for details. Under certain algebraic conditions (namely $LC^-(X)$ should be Cohen-Macaulay), and providing everything is complex analytic, the multiplicity is preserved in deformations of the function, and without this algebraic condition the multiplicity is upper semicontinuous \cite[Proposition 5.11]{BR88}. 

We need to extend this by allowing the variety to deform as well as the function, but the semicontinuity is a general algebraic property, regardless of how the data deforms (provided the dimensions are constant). 

To return to our setting, first consider the central case with $\alpha=0$, and neglect the higher order terms in the Hamiltonian, so we are in the setting of \S\,\ref{sec:study-univ-family}, and consider everything complex. The variety $\kR_0$ consists of the three axes in $\CC^3$, and the function $\jj=\half(x^2+y^2+z^2)$. We are interested in critical points of $\jj$ restricted to $\kR_0$ (and later to $\kR_\alpha$).   
This clearly has a single critical point, namely the origin, so we need to understand its multiplicity. 

The variety $LC^-(\kR_0)\subset T^*\CC^3\simeq\CC^6$ consists of a 3-dimensional subspace for each of the axes, and a further one for the stratum $\{0\}$.  Explicitly, if we use the coordinates $(k,\ell,m)$ in the dual space (to form $T^*\CC^3$) then the union of the four 3-dimensional subspaces is,
$$LC^-(\kR_0)=\{x=y=m=0\}\cup\{y=z=k=0\}\cup\{z=x=\ell=0\}\cup\{x=y=z=0\}.$$
The ideal of this variety is $\left<xy,yz,zx,xk,y\ell,zm\right>$. Now the graph of $\d \jj$ is the set $\{k=x,\;\ell=y,\;m=z\}$, and a calculation shows that the algebraic intersection number is 4 (in fact the variety $LC^-(\kR_0)$ is Cohen-Macualay so the algebraic and geometric multiplicities coincide). That is, $\jj$ has a critical point of multiplicity 4 at the origin.

Now we wish to deform the set $\kR_0$ to $\kR_\alpha$ (which is smooth as $\alpha\not\in\Delta$), and take the new conormal variety but defined with the higher order terms of $H$ included and at the same time add in the higher order terms to $H$. 

Under this deformation, the multiplicity cannot increase (it remains constant if the whole family of conormal varieties is Cohn-Macaulay \cite{BR88}, but this turns out not to be the case here, explaining why the 4 ultimately drops to 3 in the course of the proof of Theorem\,\ref{thm:stability xi=0}). It follows that in the deformed setting there are at most 4 (complex) critical points, and therefore at most 4 real ones, as claimed. 
\end{proof}

\section{Example: rigid body with rotors}
\label{sec:rotors}

We give an application to the system consisting of a free rigid body with three freely rotating rotors attached so that their respective axes lie along the three principal axes of the body \cite{K85,BKMS92,MaSc93}.  The configuration space for this system is the Lie group $G=\SO(3)\times\TT^3$, where $\TT^3=S^1\times S^1\times S^1$ which acts by rotation of the three rotors.  A matrix $A\in \SO(3)$ corresponds to the attitude of the rigid body, while the components of  $\bm{\theta}=(\theta_1,\theta_2,\theta_3) \in\TT^3$ are the angles of rotation of the three rotors, relative to the body.

The Lagrangian of this system is given by the kinetic energy, which in a principal basis is
$$L=\half \bm{\omega}^T (\II-\II_r)\bm{\omega} + \half(\bm{\omega}+\dot{\bm{\theta}})^T\II_r(\bm{\omega}+\dot{\bm{\theta}}).$$
Here $\II_r$ is the diagonal matrix whose entries are the respective moments of inertia of the rotors about their axes, $\II$ is the inertia tensor of the rigid body with the rotors locked to the body and $\bm{\omega}\in \RR^3\simeq\so(3)$ is the angular velocity vector in the body; we assume $\II-\II_r$ is invertible. For details see Sec.~3 of \cite{BKMS92}, where $\bm{\omega}$ is denoted $\Omega$ and $\dot{\bm{\theta}}$ is denoted $\Omega_r$.

The corresponding momenta are therefore,
\begin{equation} \label{eq:rotors Legendre}
 \begin{array}{rclcl}
  \bm{\mu} &=& \partial L/\partial\bm{\omega} &=&  \II\,\bm{\omega} + \II_r\dot{\bm{\theta}}\,,\\
  \bm{\sigma} &=& \partial L/\partial\bm{\dot\theta} &=& \II_r(\bm{\omega}+\dot{\bm{\theta}}).
\end{array}
\end{equation}
Here $\bm{\mu}\in\so(3)^*$ is the angular momentum in the body, and $\bm{\sigma}\in\tt^*=\RR^3$ is the \emph{gyrostatic momentum} ($\bm{\mu}$ is denoted $m$ and $\bm{\sigma}$ is denoted $\ell$ in \cite{BKMS92}). The Hamiltonian is then
\begin{equation}\label{eq:rotor hamiltonian}
H = \half(\bm{\mu}-\bm{\sigma})^T(\II-\II_r)^{-1}(\bm{\mu}-\bm{\sigma}) + \half \bm{\sigma}^T\II_r^{-1}\bm{\sigma}
\end{equation}
The momentum map for the $G$-action is
$\JJ(A,\bm{\theta},\bm{\mu},\bm{\sigma}) = (A\bm{\mu},\,\bm{\sigma})$.
Indeed, $A\bm{\mu}$ is the angular momentum of the body in space, and $\bm{\sigma}$ is the conserved quantity due to the $\TT^3$-symmetry of the system, the gyrostatic momentum.

\subsection{The free system}
As a first step to analyzing the system as it is (with no external constraints), we reduce by the free $\TT^3$-action putting $\bm{\sigma}$ constant. This gives the reduced Hamiltonian on $T^*\SO(3)$,
$$H_{\bm{\sigma}}(A,\bm{\mu}) = \half(\bm{\mu}-\bm{\sigma})^T(\II-\II_r)^{-1}(\bm{\mu}-\bm{\sigma}) $$
(the other term in (\ref{eq:rotor hamiltonian}) is now a constant so can be ignored).  When $\bm{\sigma}=0$, $H_0$ is the usual rigid body Hamiltonian $\half \bm{\mu}^T(\II-\II_r)^{-1} \bm{\mu}$ which is homogeneous of degree 2, as is $G_0$ ---see prior to Theorem \ref{thm:versal}. 

Varying $\bm{\sigma}$ gives a straightforward example of the family $\kG$. Indeed if $(\II-\II_r)^{-1}=\mathrm{diag}[a,b,c]$ and $\bm{\sigma}=-(\alpha,\beta,\gamma)$ then $H_{\bm{\sigma}}$ here is precisely $G_{\bma}$ from Theorem \ref{thm:versal} with $\bma=-\bm{\sigma}$, so it does not depend on Theorems \ref{thm:versal} or \ref{thm:stability xi=0}, just on the calculations of Section\,\ref{sec:family of RE}.

For the stability, let $\bm{\mu}=(x,y,z)$ and $(\II-\II_r)^{-1} = \mathrm{diag}[a,b,c]$ with $a>b>c$.  Then with $\bm{\sigma}=0$ the $x$- and $z$-axes consist of stable relative equilibria, and the $y$-axis of linearly unstable relative equilibria, as for the ordinary free rigid body.  Note that here the $\TT^3$-reduced system is a phase space of dimension 6 so the symplectic slice at $\bm{\mu}=0$ reduces to 0 and we are in the situation of Remark\,\ref{rmk:stability}\,(b), so all elliptic \re\ are in fact Lyapunov stable. 

Now suppose we consider $\bm{\sigma}=(\sigma_1,0,0)$ with $\sigma_1\neq0$.  In other words we have `activated' the rotor along the principal direction of lowest moment of inertia (although looking at (\ref{eq:rotors Legendre}) shows the idea of `activation' is not entirely accurate)  This will give a family of relative equilibria as in Fig.\,\ref{fig:deformations}\,(ii) (and with all green curves being made red).  This means that if the satellite is given a small angular momentum, there are two relative equilibria, both rotating about the axis with the activated rotor, and both are stable.  As the angular momentum is increased, one of these (the one of lower energy) will undergo a supercritical pitchfork bifurcation---see Fig.\,\ref{fig:bifurcations}---so that the rotation about the axis becomes unstable, while there are two new relative equilibria, rotating about axes initially close to the given axis. As the angular momentum increases further, the unstable \re\ stabilizes again, and two new unstable \re\ appear. 

A similar scenario occurs if we activate the rotor along the axis of greatest moment of inertia, except here it is the \re\ with greater energy (for given angular momentum) that loses stability in a supercritical bifurcation, before stabilizing again.

If instead the rotor along the middle axis is activated, there are again two stable \re\ and as $\|\bm{\mu}\|$ is increased, they both lose stability in supercritical pitchfork bifurcations; which one occurs first depends on the relative values of $a,b,c$ (if $a-b=b-c$ then they occur simultaneously). 

The reader is invited to supply the storyline if two or three of the rotors are activated, following Fig.\,\ref{fig:deformations}. But in every case, it should be noted that \re\ with sufficiently small angular momentum $\bm{\mu}$ are always stable if $\bm{\sigma}\neq0$, which is the setting of Theorem\,\ref{thm:stability xi neq 0}.

Note that the energy-momentum discriminants in Figure\,\ref{fig:EM-discriminants} are in fact accurate for this system, as the Hamiltonian is of degree 2.

\subsection{A controlled version} 
Now suppose the rotors are used as control mechanisms, and their angular velocities relative to the body can be fixed.  That is, put $\dot\theta_i = u_i$, fixed (a constraint).  The Lagrangian is then
$$L=\half \bm{\omega}^T (\II-\II_r)\bm{\omega} + \half(\bm{\omega}+{\bm{u}})^T\II_r(\bm{\omega}+{\bm{u}}),$$
where $\uu\in\RR^3$ is constant. 
The corresponding Hamiltonian, with variables $A, \bm{\mu}$ is
$$H(A,\bm{\mu}) = \half\bm{\mu}^T\II^{-1}\bm{\mu} - \bm{\mu}^T\II^{-1}\bm{\alpha}
$$
where $\bm{\alpha} = \II_r\dot{\bm{\theta}}$ (a constant vector whose components are the angular momenta of the spinning rotors). 

The three components of $\bm{\alpha}$ give three coefficients in place of $\dot{\bm{\theta}}$, which unfold the singularity occurring when $\bm{\alpha}=0$, and provided the three principal moments of inertia of the rigid body are distinct, we obtain the same unfolding as described above, and again the Hamiltonian is of degree 2 so the energy-momentum discriminants shown in Fig.\,\ref{fig:EM-discriminants} are accurate.

\section{Singularity theory and deformations}
\label{sec:singularity theory}

In this section we use techniques from singularity theory to prove Theorem~\ref{thm:versal}.   Recall from Section~\ref{sec:reduction} that given an $\SO(3)$-invariant Hamiltonian $H$ on $\kP$ we define the reduced Hamiltonian $h:\so(3)^*\to\RR$, and the set $\RE\subset \so(3)^*$ of relative equilibria coincides with the set of critical points of the energy-Casimir map
$$(h,\jj) = \left(h(x,y,z),\,\half(x^2+y^2+z^2)\right).$$
Thus $\RE$ is the set where the rank of the Jacobian matrix,
\begin{equation} \label{eq:d(phi,h)}
F(x,y,z) = \d(\jj,h) = \left[\matrix{h_x& h_y& h_z\cr x&y&z}\right]
\end{equation}
is at most 1.  Let $V\subset \Mat(2,3)$ consist of all $2\times3$ matrices of rank at most 1.  Then $\RE=F^{-1}(V)$. {Here $h$ is a single (reduced) Hamiltonian, so corresponds to $h_0$ from earlier sections, and similarly $F$ corresponds to $F_0$ (ie, with $\bma=0$).}

Perturbations of the Hamiltonian $H$ produce perturbations of the Jacobian matrix $F$, and hence deformations of $\kR=F^{-1}(V)$.  Singularity theory provides a technique for deciding which deformations of $F^{-1}(V)$ arise by perturbing $F$, and the appropriate equivalence relation on $F$ is called $\KV$-equivalence and was introduced by Damon \cite{Damon-1987}, see also \cite{Damon-1991}. We recall this briefly before continuing with the proof.

Let $F,G:X\to Y$, and let $V\subset Y$ (everything in sight should be considered as germs).  Then $F$ and $G$ are said to be $\KV$-equivalent if there is a diffeomorphism $\psi$ of $X$ and a diffeomorphism $\Psi$ of $X\times Y$ preserving $X\times V$ and of the form $\Psi(x,y) = (\psi(x),\, \psi_1(x,y))$, such that
$$\Psi(x,\, F(x)) = (\psi(x),\, G(\psi(x)))\,;$$ 
that is, $\Psi$ maps the graph of $F$ to the graph of $G$. It follows in particular that $\psi(F^{-1}(V))=G^{-1}(V)$, so that these two sets are diffeomorphic.  If $V$ is just a point, then this reduces to ordinary $\mathcal{K}$-equivalence.

There are several rings and modules we need to consider. For $F:X\to Y$, let $\kE_X$ and $\kE_Y$ be the rings of germs at 0 of $C^\infty$ functions on $X$ and on $Y$ respectively.  Similarly, $\theta_X$ and $\theta_Y$ are the modules over $\kE_X$ and $\kE_Y$ of germs of vector fields on $X$ and $Y$ respectively. For $V\subset Y$ we write $\theta_V$ for the submodule of $\theta_Y$ consisting of vector fields tangent to $V$ (often denoted $\mathrm{Derlog}(V)$ in the singularity theory literature).  And finally one writes $\theta(F)$ for the $\kE_X$-module of `vector fields along $F$', meaning sections of the pull back of $TY$ to $X$ via $F$, or more prosaically if $X$ and $Y$ are linear spaces, $\theta(F)$ is the $\kE_X$ module of all germs at $0$ of maps $X\to Y$

In our setting, $X=\RR^3$, $Y=\Mat(2,3)$, and $V\subset Y$ is the set of matrices of rank at most 1. Now, there is a natural action of $\GL(3)\times\GL(2)$ on $\Mat(2,3)$ given by $(A,B)\cdot M = A M B^{-1}$, and of course this action preserves $V$; indeed, $V$ consists of just two orbits of this action: the origin and the set of matrices of rank equal to 1. The infinitesimal version of this action gives a map $\gl(2)\times\gl(3)\to \theta_Y$, whose image therefore lies in $\theta_V$.  Write $\theta_V'\subset\theta_V$ for the $\kE_Y$-module generated by these vector fields. (It seems likely that $\theta_V'=\theta_V$, though for the computations we will see that in fact $\theta_V'$ suffices.)

Now, $\dim(\gl(3)\times\gl(2))=13$, but the element $(I,-I)$ acts trivially, so that $\theta_V'$ has 12 generators; they are vector fields such as
$$\pmatrix{a_{11}&a_{12}&a_{13}\cr 0&0&0}, \quad \pmatrix{0&0&0 \cr a_{11} &a_{12}&a_{13}}, \quad \pmatrix{0&a_{11}&0\cr 0&a_{21}&0},\quad\dots
$$
Here the matrix $(u_{ij})$ refers to the vector field $\sum_{i,j} u_{ij}\frac{\partial}{\partial a_{ij}}$. In words, the generators are obtained by taking a single row $\pmatrix{a_{i1}&a_{i2}&a_{i3}}$ of $A$ and placing it in either row with 0s in the other row (there are 4 such vector fields), and then taking a single column $\pmatrix{a_{1j}\cr a_{2j}}$ and placing it in any column and filling the remaining 2 columns with zeros (9 such vector fields).

Given any $\kE_Y$-module $\theta$ of vector fields on $Y$, one defines two $\kK_\theta$ tangent spaces of a map $F:X\to Y$ to be the $\kE_X$-submodules of $\theta(F)$, the $\kK_\theta$-tangent space
$$T\,\kK_\theta\cdot F = tF(\maxid_X\theta_X) + F^*\theta,$$
and the extended $\kK_\theta$-tangent space
$$T\,\kK_{\theta,e}\cdot F = tF(\theta_X) + F^*\theta.$$
Here 
\begin{itemize}
\item $tF(\theta_X)$ means the $\kE_X$-module generated by the partial derivatives of $F$, and so  $tF(\maxid_X\theta_X)$ is the maximal ideal $\maxid_X$ times $tF(\theta_X)$, and
\item $F^*\theta=\kE_X\left\{v\circ F\mid v\in\theta\right\}$, the $\kE_X$-module generated by the vector fields in $\theta$ composed with $F$.
\end{itemize}
The ordinary tangent space is used for finite determinacy properties while the extended one is used for versal deformations.

Now consider the map $F$ defined in (\ref{eq:d(phi,h)}). The partial derivatives of $F$ lead to
$$tF(\theta_X) =\kE_X\left\{\pmatrix{h_{xx}&h_{xy}&h_{xz}\cr 1&0&0},\; \pmatrix{ h_{yx}&h_{yy}&h_{yz}\cr 0&1&0},\; \pmatrix{h_{zx}&h_{zy}&h_{zz}\cr 0&0&1} \right\},$$
where subscripts refer to partial derivatives. The second term in $T\kK_V\cdot F$ contains 12 generators such as
$$\pmatrix{h_x&h_y&h_z\cr 0&0&0},\quad \pmatrix{x&y&z\cr 0&0&0},\quad \pmatrix{0&0&h_y\cr 0&0&y}.$$

Now apply this to the function $h(x,y) = \half(ax^2+by^2+cz^2)$, with $a,b,c$ distinct. We obtain $T\kK_V\cdot F = \maxid_X\theta(F)$, and 
$$T\kK_{V,e}\cdot F = \maxid_X \theta(F) + \RR\left\{\pmatrix{1&0&0\cr 0&0&0},\;  \pmatrix{0&1&0\cr 0&0&0},\;\pmatrix{0&0&1\cr 0&0&0} \right\},
$$
where $\maxid_X$ is the maximal ideal of functions on $X=\RR^3$ that vanish at 0, so $\maxid_X=\left<x,\,y,\,z\right>$.

Here we have used $\theta_V'$ rather than $\theta_V$ and \emph{a priori} the expression above is for the corresponding module $T\kK_V'\cdot F$.  However, the fact that the vector fields tangent to $V$ all vanish at the origin in $\Mat(2,3)$ implies there are no other elements of $T\kK_V\cdot F$, so that for this function $h$ one has indeed that $T\kK_V'\cdot F=T\kK_V\cdot F$, and similarly for the extended tangent spaces.

We are now in a position to prove Theorem \ref{thm:versal}.

\begin{proofof}{Theorem \ref{thm:versal}}
Since $\kK_V$-equivalence is one of Damon's geometric subgroups of $\kK$, it follows that the usual finite determinacy and versal deformation theorems hold \cite{Damon-1984,Damon-1991}. In particular, with the family $\kG$ as in the statement of the theorem, one has that
$$\theta(F) = T\kK_{V,e}\cdot F + \RR\cdot\left\{\frac{\partial \kG}{\partial \alpha},\; \frac{\partial \kG}{\partial \beta},\; \frac{\partial \kG}{\partial \gamma}\right\}.$$
Consequently, by Damon's theorems, the family $\kG_{\bma}$ (with $\bma=(\alpha,\beta,\gamma)$) is a $\kK_V$-versal deformation of $F$, which is what is required for the theorem.

Furthermore, $\maxid_X\theta(F)\subset T\kK_V\cdot F$ implies that $F$ is 1-determined w.r.t.\ $\kK_V$-equivalence. It follows that if $h(x,y,z)$ has the same 2-jet as $g$ then the map $F$ associated to $\kG$ and $\kH$ have the same 1-jet so are $\kK_V$-equivalent; consequently $g$ is in this sense $2$-determined.
\end{proofof}

\begin{remark}
The vector field constructions of singularity theory will all produce diffeomorphisms whose linear part at the origin is the identity.  However, allowing more general diffeomorphisms, one can show further that $\kH$ is equivalent to the version of $\kG$ with coefficients $a=1,\,b=0,\,c=-1$ say. Indeed, one can be obtained from the other by row operations on the matrix in (\ref{eq:d(phi,h)}).
\end{remark}

\paragraph{Other singularities}
We considered above the $\KV$-equivalence arising from a quadratic Hamiltonian at the origin.  To understand the equivalence better, two further examples are worth considering:

\begin{itemize}
\item[(1)] If  $\d h(0,0,0)\neq0$, we have $h(x,y,z) = ax+by+cz+\cdots$ with $(a,b,c)\neq(0,0,0)$. In this case $T\KV\cdot F = \theta(F)$, so $h$ is `stable' in the appropriate sense: any sufficiently small deformation $h'$ of $h$ gives rise to a map $F'$ which is $\KV$-equivalent to $F$, so having diffeomorphic sets of relative equilibria (this is not surprising: we know it is just a non-singular curve through the origin).

\medskip

\item[(2)] At points away from $0$ in $\RR^3$, $\KV$-equivalence is more familiar: choose local coordinates so that $\jj(x,y,z) = z$ (this is possible in a neighbourhood of a point with $\d\jj\neq0$), then locally
$$F=\pmatrix{h_x&h_y&h_z\cr 0&0&1}.$$
Thus $F(x,y,z)\in V$ if and only if $h_x=h_y=0$. That is, this approach is finding critical points of $h$ as a function of $(x,\,y)$ with parameter $z$, and the $\kK_V$-equivalence of $F$ corresponds to $\kK$-equivalence of 1-parameter families of gradients of functions.  This is not the same as unfolding equivalence (or bifurcation-equivalence), as the relation does not distinguish $z$ as a parameter. For example the differentials of the maps $(h,\jj) = (x^2-y^2,z)$ and $(xz,z)$ are $\KV$-equivalent. 
\end{itemize}

\small

\end{document}